\numberwithin{equation}{section}
\newtheorem{theorem}{Theorem}[section]
\newtheorem{lemma}[theorem]{Lemma}
\newtheorem{proposition}[theorem]{Proposition}
\newtheorem{corollary}[theorem]{Corollary}
\newtheorem{conjecture}[theorem]{Conjecture}
\newtheorem{remark}[theorem]{Remark}
\theoremstyle{definition}
\newcommand{\Th}{\Theta_{V}}
\newcommand{\omt}{\widetilde{\omega}}
\newcommand{\mc}[1]{\ensuremath{\mathcal{#1}}}
\newcommand{\Z}{\mathbb{Z}}
\newcommand{\C}{\mathbb{C}}
\newcommand{\HH}{\mathbb{H}}
\newcommand{\vac}{\mathbf{1}}
\newcommand{\Fim}{F^{\Omega}\left(\mathcal{D}_{\mathrm{sew}}\right)}
\DeclareMathOperator{\wt}{\mathrm{wt}\, }
\DeclareMathOperator{\Tr}{\mathrm{Tr}\, }
\DeclareMathOperator{\End}{\mathrm{End}\, }
\DeclareMathOperator{\Sp}{\mathrm{Sp}}
\newcommand{\cO}{\mathcal{O}}
\newcommand{\sups}[1]{\,^{#1} }
\begin{document}

\title{Genus Two Virasoro Correlation Functions for Vertex Operator Algebras}
\author{
Thomas Gilroy\thanks{School of Mathematics and Statistics, University College Dublin, Ireland. email: tpgilroy@gmail.com} \
and 
Michael P.~Tuite\thanks{School of Mathematics, Statistics and Applied Mathematics, NUI Galway, Ireland. email: michael.tuite@nuigalway.ie.}
}

\maketitle

\begin{abstract}
We consider all genus two correlation functions for the  Virasoro vacuum descendants of a vertex operator algebra. 
These are described in terms of explicit generating functions that can be combinatorially expressed in terms of a sequence of globally defined differential operators on which the genus two Siegel modular group $\Sp(4,\Z)$ has a natural action.
\end{abstract}

\newpage

\section{Introduction}
\label{Intro}
A Vertex Operator Algebra (VOA) (e.g. \cite{FLM}, \cite{K}, \cite{LL}, \cite{MT1}) is an algebraic  system related to Conformal Field Theory (CFT) in physics e.g. \cite{DMS}. An essential ingredient of a VOA or  CFT is the existence of a conformal Virasoro vector whose vertex operator modes generate the  Virasoro algebra of central charge $c$. The connection between VOAs, the Virasoro vector and genus one Riemann surfaces was established in the foundational work of Zhu \cite{Z} giving a rigorous basis for many ideas in CFT. 
Zhu established a general recursion formula  relating any genus one $n$-point correlation function to linear combination of  $(n-1)$-point correlation functions. When applied to genus one Virasoro correlation functions,  Zhu recursion implies the genus one Ward identities. For a suitable VOA (e.g. if $V$ is $C_2$ cofinite \cite{Z}) the Ward identities imply the partition function for $V$ and its ordinary modules satisfy a genus one modular ordinary differential equation. This paper lays some of the ground work for the development of a new theory of genus two $\Sp(4,\Z)$ Siegel modular partial differential equation for the genus two partition function of a suitable VOA $V$ and its ordinary modules.  
This is illustrated in a sequel paper  where we describe  a new $\Sp(4,\Z)$ modular partial differential equation for the partition functions (for $V$ and its ordinary  modules) for the $(2,5)$  minimal Virasoro model of central charge $c=-\frac{22}{5}$  \cite{GT2}.  

\medskip

Correlation functions for VOAs on a genus two Riemann surface have been defined, and in some cases calculated based on an explicit sewing procedure for sewing two tori together \cite{MT2,MT3}.
We recently  described a general Zhu recursion formula for genus two $n$-point correlation functions  which gives rise to new genus two Ward identities when applied to Virasoro vector $n$-point functions \cite{GT1}.
The purpose of this paper is to describe all genus two correlation functions for Virasoro descendants of the vacuum vector in terms of explicit generating functions. These generating functions, which satisfy the genus two Ward identity, are shown to be combinatorially expressed in terms of a  sequence of globally defined differential operators on which the Siegel modular group $\Sp(4,\Z)$  has an natural action.
 
 \medskip

We begin in Section \ref{Review} with a very brief review of VOAs on a genus two Riemann surface. We review a sewing scheme for constructing a genus two surface from two punctured tori and how to formally define the genus two partition and $n$-point correlation functions in terms of genus one VOA data \cite{MT2,MT3}.  

In Section \ref{genus2chap} contains the main results of this paper. We first show that genus two $n$-point correlation functions for $n$ Virasoro vectors  are  generating functions for the correlation functions for all Virasoro vacuum descendants in a similar fashion to the genus zero and one cases \cite{HT1}. 
These generating functions satisfy  genus two Ward identities (derived from genus two Zhu recursion) which involves genus two \emph{generalised Weierstrass functions} related to a global  (2,−1)-bidifferential $\Psi(x,y)$ holomorphic for $x\neq y$ \cite{GT1}. 
We also describe some analytic differential equations, which also involve $\Psi(x,y)$, for the genus two bidifferential $\omega(x,y)$, normalised holomorphic 1-differentials $\nu_{1}(x),\nu_{2}(x)$ and the projective connection $s(x)$.
Using these differential equations, we demonstrate in Theorem \ref{theor:main} how to express each  generating function  in a symmetric way as a sum  of given weights of appropriate graphs. 
In particular, the Virasoro vector $n$-point function is determined  by the action of a specific    symmetric differential operator $\cO_n$ on the normalised partition function
$Z_V^{(2)}/ (Z_M^{(2)} )^{c}$ (cf. \eqref{eq:ThetaV}) where $Z_V^{(2)}$ denotes the genus two partition function for $V$,  $Z_M^{(2)}$ denotes the genus two partition function for the Heisenberg VOA $M$ and $c$ is the central charge.

Lastly in Section~\ref{sect:an_mod} we consider general analytic and modular transformation properties of the differential operator $\cO_n$  in any coordinate system on an arbitrary genus two Riemann surface. In particular, in Theorem~\ref{th:OnMod}, we show how $\cO_n$ transforms 
under the Siegel modular group $\Sp(4,\Z)$ having established that 
the global  (2,−1)-bidifferential $\Psi(x,y)$ is  $\Sp(4,\Z)$ invariant in Theorem~\ref{th:2P1mod}.

\section{Vertex operator algebras on a genus two Riemann surface}\label{Review}
\subsection{Genus two Riemann surfaces}\label{ssect:g2}
We briefly review some  concepts in genus two Riemann surface theory e.g. \cite{FK,F,Mu1}. 
Let  $\mathcal{S}^{(2)}$ be a compact genus two Riemann surface with canonical homology basis $\alpha^{i},\beta^i$ for $i=1, 2$. 
There exists a unique  holomorphic symmetric bidifferential $(1,1)$-form $\omega(x,y)$, the \emph{normalised bidifferential of the second kind},  where
for  $x\neq y\in \mathcal{S}^{(2)}$ 
\begin{align}\label{omega}
\omega(x,y)&=\frac{dxdy}{(x-y)^{2}}+\frac{1}{6}s(x)-\frac{(x-y)}{12}\partial_x s(x)+O\left((x-y)^2\right),
\\
\oint_{\alpha^{i}}\omega(x,\cdot )&=0,\quad i=1,2.
\notag
\end{align} 
$s(x)$ is the  \emph{projective connection}  transforming under an analytic map 
$x\rightarrow \phi(x)$ as
\begin{align}
s(x)=s(\phi(x)) +\{\phi(x),x\}dx^2,
\label{eq:sphi}
\end{align}
 where  $\{\phi(x),x\}=\frac{\phi'''(x)}{\phi'(x)}-\frac{3}{2}\left(\frac{\phi''(x)}{\phi'(x)} \right)^2$ is the Schwarzian derivative.
Futhermore
\begin{align*}
\nu_{i}(x)=\oint_{\beta^{i}}\omega(x,\cdot ), \qquad 
 \Omega_{ij}=\frac{1}{2\pi i}\oint_{\beta^{i}}\nu_{j}, \quad i,j=1,2.
\end{align*}
for  \emph{holomorphic differentials} $\nu_{i}(x)$ normalised by $\oint_{\beta^{i}}\nu_{j}=2\pi i\delta_{ij}$ 
and  \emph{period matrix} $\Omega  \in \HH_{2}$, the genus two Siegel upper half plane i.e. $\Omega=\Omega^T$ and $\Im(\Omega)>0$.

\medskip
Consider the genus two Riemann surface $\mathcal{S}^{(2)}$ constructed by sewing two genus one tori $\mathcal{S}_{a}=\C/\Lambda_a $, for lattice $\Lambda_a=2\pi i(\Z\tau_a\oplus\Z)$  with modular parameter $\tau_a\in \HH_1$ for $a=1,2$ \cite{MT2}. 
Let $z_a\in \mathcal{S}_{a}$, $\epsilon\in \C$ and define punctured tori 
\begin{align*}
\widehat{\mathcal{S}}_{1}=
\mathcal{S}_{1}\backslash\left \{z_{1},\left\vert z_{1}\right\vert \leq |\epsilon |/r_{2}\right\},
\qquad
\widehat{\mathcal{S}}_{2}=
\mathcal{S}_{2}\backslash\left \{z_{2},\left\vert z_{2}\right\vert \leq |\epsilon |/r_{1}\right\},
\end{align*}
where $|\epsilon |\leq r_{1}r_{2}$. 
We identify the annular regions  
$\{z_{1},|\epsilon |/r_{2}\leq \left\vert
z_{1}\right\vert \leq r_{1}\}$ and 
$\{z_{2},|\epsilon |/r_{1}\leq \left\vert
z_{2}\right\vert \leq r_{2}\}$
via  the sewing relation 
$z_{1}z_{2}=\epsilon$.
Then $\mathcal{S}^{(2)}$ is parameterized by the sewing domain 
\begin{align}\label{Deps}
\mathcal{D}_{\mathrm{sew}}=\left\{(\tau _{1},\tau _{2},\epsilon )\in \HH_{1}
\times\mathbb{ H}_{1}\times\mathbb{ C}: |\epsilon |<\frac{1}{4}
D(q_{1})D(q_{2})\right\},
\end{align}
where $q_{a}=e^{2\pi i\tau_a}$ and 
$D(q_a )=\min_{\lambda_a\in\Lambda_a,\lambda_a\neq 0}|\lambda_a |$. We may then obtain explicit expressions for $\omega(x,y)$, $\nu_i(x)$ and $\Omega_{ij}$  on $\mathcal{S}^{(2)}$ for $x,y\in \widehat{\mathcal{S}}_{1} \cup \widehat{\mathcal{S}}_{2}$ described in \cite{MT2}.

\subsection{Vertex operator algebras on a torus}
We review  aspects of vertex operator algebras (e.g. \cite{ FLM,K,LL,MT1}). 
A Vertex Operator Algebra (VOA) is a quadruple $(V,Y,\mathbf{1},\omega)$ consisting of a $\Z$-graded complex vector space $V = \bigoplus_{n\in\Z}\,V_{(n)}$ where $\dim V_{(n)}<\infty$ for each $n\in \Z$, a linear map $Y:V\rightarrow\End(V)[[z,z^{-1}]]$ for a formal parameter $z$ and pair of distinguished vectors: the vacuum $\mathbf{1}\in V_{(0)}$ and the conformal vector $\omega\in V_{(2)}$. For each $v\in V$, the image under the map $Y$ is the \emph{vertex operator}
\begin{align*}
Y(v,z) = \sum_{n\in\Z}v(n)z^{-n-1},
\end{align*}
with \emph{modes} $v(n)\in\End(V)$, where $Y(v,z)\mathbf{1} = v+O(z)$. Vertex operators satisfy \emph{locality} i.e. for all $u,v\in V$ there exists an integer $k\geq 0$ such that
\begin{align*}
(z_1 - z_2)^k \left[ Y(u,z_1), Y(v,z_2) \right] = 0.
\end{align*}
The vertex operator of the conformal vector $\omega$ is 
$Y(\omega,z) = \sum_{n\in\Z}L(n)z^{-n-2}$
where the modes $L(n)$ satisfy the Virasoro algebra with \emph{central charge} $c$
\begin{align*}
&[L(m),L(n)] = (m-n)L(m+n) + c\,\frac{m^3-m}{12}\delta_{m,-n}\,\mathrm{Id}_V.
\end{align*}
Furthermore, $L(0)v = kv$ for\emph{ conformal weight} $\wt(v)=k$ for all $v\in V_{(k)}$ and $Y(L(-1)u,z) = \partial_z Y(u,z)$.

\bigskip 
In order to describe VOAs on a torus, Zhu \cite{Z} introduced an isomorphic VOA $(V,Y[~,~],\mathbf{1},\omt)$ with ``square bracket'' vertex operators
\begin{align*}
Y[v,z] = \sum_{n\in \Z} v[n] z^{-n-1} = Y\left(e^{{L(0)}}v, e^{z}-1\right),
\end{align*} 
and conformal vector $
\omt = \omega - \frac{c}{24}\mathbf{1}
$
 with Virasoro modes $ L[n] $.

\medskip

Define the  \emph{genus one partition function} by the formal trace $Z^{(1)}_V(\tau) = \Tr_V\left( q^{L(0)-c/24} \right)$, with $q=e^{2\pi i\tau}$, the \emph{genus one correlation one point function} by the formal trace 
\begin{align*}
Z^{(1)}_V(v;\tau) = \Tr_V\left( o(v) q^{L(0)-c/24} \right),\quad v\in V,
\end{align*}
where $o(v)=v(k-1)$ for  $v\in V_{(k)}$ 
and the \emph{genus one $n$-point correlation function} for 
$v_1, \ldots,v_n  \in V$ inserted at $z_1, \ldots,z_n\in \C/(2\pi i(\Z\tau\oplus\Z))$ by
\begin{align*}
Z^{(1)}_V(v_1,z_1;\ldots;v_n,z_n;\tau) =Z^{(1)}_V(Y[v_1,z_1]\ldots Y[v_n,z_n]\vac;\tau). 
\end{align*}
Zhu describes a general recursion formula for expressing any  genus one $n$-point correlation function  as  a linear combination  of $(n-1)$-point functions with universal coefficients given by explicit  elliptic functions \cite{Z}. 
All of the above definitions can be naturally extended to define $Z^{(1)}_{W}(\ldots)$ for an ordinary graded $V$-module $W$ where  the trace is taken over $W$. 

\subsection{VOAs on a genus two Riemann surface}
We define the genus two partition function and $n$-point correlation function for a VOA based on the sewing scheme for $\mathcal{S}^{(2)}$ constructed from  two tori $\mathcal{S}_1$ and $\mathcal{S}_2$  \cite{MT3,GT1}.  
The \emph{genus two partition function} for  $V$ of strong CFT-type is defined by 
\begin{align}
Z^{(2)}_V(\tau_1,\tau_2,\epsilon) = \sum_{u\in V} Z^{(1)}_V(u;\tau_1) Z^{(1)}_V(\overline{u};\tau_2),
\label{eq:Zdef}
\end{align}
where the formal sum is taken over any $V$-basis and $\overline{u}$ is the dual of $u$ with respect to an invariant invertible bilinear form $\langle~,~\rangle$ associated with the Mobius map $z\rightarrow \epsilon/z$ (see \cite{GT1} for more details).

The \emph{genus two} $n$\emph{-point correlation function} for $a_1,\ldots,a_L\in V$ 
and $b_1,\ldots,b_R\in V$ formally  inserted at $x_1,\ldots,x_L\in\widehat{\mathcal{S}}_1$ and $y_1,\ldots,y_R\in\widehat{\mathcal{S}}_2$, respectively, is defined by
\begin{align}
&Z^{(2)}_V(a_1,x_1;\ldots;a_L,x_L|b_1,y_1;\ldots;b_R,y_R;\tau_1,\tau_2,\epsilon) 
\notag
\\
&= \sum_{u\in V} Z^{(1)}_V(Y[a_1,x_1]\ldots Y[a_L,x_L]u;\tau_1) Z^{(1)}_V(Y[b_R,y_R]\ldots Y[b_1,y_1]\overline{u};\tau_2).
\label{eq:Znptdef}
\end{align}
Convergent expressions have been found for such correlation functions for  particular VOAs such as the Heisenberg VOA and  lattice VOAs \cite{MT3}. 
A formal Zhu recursion formula for a genus two  $n$-point function   in terms of   $(n-1)$-point functions  is described in \cite{GT1} where the coefficients  are formal series, called \emph{generalised Weierstrass functions}, which depend on the conformal weight of the recursion vector but are otherwise universal. 
For  conformal weight $1$ or $2$, these series are holomorphic on appropriate domains \cite{GT1}. 
The above definition  can be naturally extended to define $Z^{(2)}_{W_1,W_2}(\ldots)$ for a pair of ordinary graded $V$-modules $W_1,W_2$ where  the left or right hand   trace is taken over $W_1$ or $W_2$ respectively.

\section{Genus Two Virasoro Correlation Functions}
\label{genus2chap}
\subsection{The  Virasoro generating function}
Consider the genus two Virasoro $n$-point correlation function 
for  $z_1,\ldots,z_n\in \widehat{\mathcal {S}}_1$
\begin{align}
Z_V^{(2)}\left(\omt,z_1;\ldots;\omt,z_n\right)=
\sum_{u\in V} Z^{(1)}_V\left (Y[\omt,z_1]\ldots Y[\omt,z_n]u;\tau_1\right ) Z^{(1)}_V(\overline{u};\tau_2),
\label{eq:Virnpt}
\end{align}
(where we suppress the dependence on $\tau_1,\tau_2,\epsilon$).
We define the formal differential
\begin{align}
G_n(\mathbf{z})=G_n(z_1,\ldots,z_n)=Z_V^{(2)}\left(\omt,z_1;\ldots;\omt,z_n\right)d\mathbf{z}^2,
\label{eq:Gm}
\end{align}
where  $d\mathbf{z}^2=dz_i^2\ldots dz_n^2$. 
$G_n(\mathbf{z})$  
 is independent of whether we formally  insert  $\omt$ at $z_i\in \widehat{\mathcal{S}}_1$ or at $ \epsilon/z_i\in \widehat{\mathcal{S}}_2$ (Proposition~6 of \cite{MT3}).
Similarly to \cite{HT1} we find
\begin{proposition}
\label{prop_Ggen}
$G_m(\mathbf{z})$ is symmetric in $z_i$ and is a  generating function for all genus two $n$-point correlation functions for Virasoro vacuum descendants.
\end{proposition}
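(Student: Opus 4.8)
The plan is to follow the genus zero and genus one arguments of \cite{HT1}: the only ingredients used there are the associativity axiom of $V$ and the mutual locality of the field $Y[\omt,z]$ with itself, both of which are genus-independent, while the sewing construction \eqref{eq:Zdef}--\eqref{eq:Znptdef} depends on these only linearly. Two things are to be shown, the symmetry of $G_n$ in the $z_i$ and the generating-function property, and I would treat them separately.

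For symmetry, first I would write, for each fixed $u$, $Z^{(1)}_V(Y[\omt,z_1]\ldots Y[\omt,z_n]u;\tau_1)=Z^{(1)}_V(Y[\omt,z_1]\ldots Y[\omt,z_n]Y[u,0]\vac;\tau_1)$, a genus one $(n+1)$-point function. By Zhu's theory \cite{Z} this is, as a meromorphic function of $z_1,\ldots,z_n$ with singularities only along the diagonals $z_i=z_j$ and at the insertion point $0$, symmetric under permutations of $z_1,\ldots,z_n$ --- a restatement of the mutual locality of the $n$ identical fields $Y[\omt,z_i]$. Multiplying by $Z^{(1)}_V(\overline u;\tau_2)$ and summing over a $V$-basis as in \eqref{eq:Virnpt}, the symmetry is inherited by $Z_V^{(2)}(\omt,z_1;\ldots;\omt,z_n)$, hence by $G_n(\mathbf z)$ after multiplication by the symmetric factor $d\mathbf z^2$; the sum over $u$ is handled by the convergence results on which \eqref{eq:Zdef}--\eqref{eq:Znptdef} rest.

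For the generating-function property, the key point is that every Virasoro vacuum descendant is a linear combination of vectors $w=L[-k_1]\cdots L[-k_r]\vac$ with all $k_j\ge 2$ (a spanning set since $L[-1]\vac=0$), and that associativity gives, for $k\ge 2$ and $v\in V$, the contour identity $Y[L[-k]v,x]=\frac{1}{2\pi i}\oint_x (z-x)^{1-k}\,Y[\omt,z]\,Y[v,x]\,dz$ with the contour encircling $x$ only. Iterating this over the successive tails of $w$ (nested contours about $x$) I would express $Y[w,x]$ as an $r$-fold contour integral of $Y[\omt,z_1]\cdots Y[\omt,z_r]$ against the kernel $\prod_j (z_j-x)^{1-k_j}$, substitute this into \eqref{eq:Znptdef} for each insertion, and push the (finitely many) contour operations through the sewing sum over $u$; descendants placed on the second torus are allowed, since by Proposition~6 of \cite{MT3} the left/right placement of the $\omt$'s is immaterial. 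This exhibits every genus two correlation function of Virasoro vacuum descendants as an iterated residue of some $G_N$ ($N$ the total number of Virasoro modes), equivalently as a Laurent coefficient of $G_N$ expanded about a clustered configuration; in particular, for descendants of depth one, the coefficient of $\prod_i (z_i-x_i)^{m_i}$ in the Taylor expansion of $G_n$ about any point with distinct $x_i$ is, up to explicit factors, $Z_V^{(2)}(L[-m_1-2]\vac,x_1;\ldots;L[-m_n-2]\vac,x_n)$. Conversely $G_n$ is itself the correlator in which each descendant is $\omt=L[-2]\vac$, so the family $\{G_n\}$ carries exactly the same data as the genus two Virasoro-descendant correlators.

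The hard part will be rigour in the formal setting. The associativity/OPE identity, and more pointedly the interchange of the contour (Laurent-coefficient) operations with the infinite sewing sum over $u$ and with the two genus one traces, must be justified inside the formal-series framework (in $\epsilon$, $q_1$, $q_2$ and the $z_i$) underlying \eqref{eq:Zdef}--\eqref{eq:Znptdef}; one must also check that the nested contours about each $x_i$ isolate the intended residue and avoid the remaining insertion points and the sewing punctures. For general $V$ the symmetry step similarly relies on importing from \cite{Z,GT1} that the relevant correlators are meromorphic in the $z_i$ with poles confined to the diagonals and the punctures. All of this is available from the cited constructions, so the argument should reduce to careful bookkeeping rather than new analytic input.
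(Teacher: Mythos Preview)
Your proposal is correct and follows the same two-step strategy as the paper (locality for symmetry, associativity for the generating property), but the execution differs in a way worth noting. The paper's argument is purely formal and algebraic: for the symmetry step it simply invokes locality at the level of formal series (no appeal to meromorphicity or Zhu's convergence results is needed), and for the generating property it observes directly that $L[-k]$ is the coefficient of $x^{k-2}$ in $Y[\omt,x]$, then uses associativity together with the lower-truncation axiom to identify $Y[Y[\omt,x_{i1}]\cdots Y[\omt,x_{im_i}]\vac,z_i]$ with $Y[\omt,z_i+x_{i1}]\cdots Y[\omt,z_i+x_{im_i}]$ after multiplication by a suitable power of $\prod(x_{ij}+z_i)$. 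This exhibits the descendant correlator as a Taylor coefficient of $G_M(z_1+x_{11},\ldots,z_n+x_{nm_n})$ for $M=\sum m_i$, with no contours and no interchange of limits to justify.

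Your route via the contour identity $Y[L[-k]v,x]=\frac{1}{2\pi i}\oint_x(z-x)^{1-k}Y[\omt,z]Y[v,x]\,dz$ and iterated residues is equivalent in content, but it pushes the argument into an analytic register where precisely the issues you flag (interchanging contours with the sewing sum and the traces, isolating contours from other insertions and the punctures) must be addressed. The paper's lower-truncation/Taylor-coefficient formulation dispenses with all of that bookkeeping: everything happens inside formal Laurent series, so what you identify as ``the hard part'' simply does not arise. If you adopt that formulation, your proof collapses to a few lines.
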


\begin {proof}
$G_m(\mathbf{z})$ is a symmetric  in $z_1,\ldots,z_m$ by locality.  
Consider the genus two $n$-point function for $n$ Virasoro vacuum descendants
 $v_i=L[-k_{i1}]\ldots L[-k_{im_i}]\vac$ inserted at $z_i\in \widehat{\mathcal{S}}_1$ for $i=1,\ldots,n$ and $k_{ij}\ge 2$ given by
\begin{align*}
Z_V^{(2)}\left(v_{1},z_1; \ldots; v_{n},z_n\right) 
=\sum_{u\in V} Z_V^{(1)}\left(Y[v_{1},z_1]\ldots Y[v_{n},z_n]u;\tau_1\right) Z^{(1)}_V(\overline{u};\tau_2).
\end{align*}  
$Z_V^{(1)}\left(Y[v_{1},z_1]\ldots Y[v_{n},z_n]u;\tau_1\right)$  is the coefficient of $\prod_{i=1}^{n}\prod_{j=1}^{m_{i}}(x_{ij})^{k_{ij}-2}$  in 
\[
Z_V^{(1)}\left(
Y[Y[\omt,x_{1}]\ldots Y[\omt,x_{1m_1} ]\vac,z_1]\ldots 
Y[Y[\omt,x_{n1}]\ldots Y[\omt,x_{nm_n} ]\vac,z_n]u;\tau_1\right).
\] 
Using   associativity and lower truncation  (e.g. \cite{K,LL,MT1}) we find for $N\gg 0$ that
\begin{align*}
&\prod_{i=1}^{n}\prod_{j=1}^{m_{i}}(x_{ij}+z_i)^{N}
Y[Y[\omt,x_{11}]\ldots Y[\omt,x_{1m_1} ]\vac,z_1]\ldots 
Y[Y[\omt,x_{n1}]\ldots Y[\omt,x_{nm_n} ]\vac,z_n]u
\\
&=\prod_{i=1}^{n}\prod_{j=1}^{m_{i}}(x_{ij}+z_i)^{N}
Y[\omt,z_1+x_{11}]\ldots Y[\omt,z_1+x_{1m_1} ]\ldots 
Y[\omt,z_n+x_{n1}]\ldots Y[\omt,z_n+x_{nm_n}]u.
\end{align*}
 Thus the genus two $n$-point function for $v_1,\ldots ,v_n$
is the coefficient of $\prod_{i=1}^{n}\prod_{j=1}^{m_{i}}(x_{ij})^{k_{ij}-2}$ 
of the formal expansion of $Z_V^{(2)}\left(\omt,z_1+x_{11};\ldots;\omt,z_n+x_{nm_n}\right)$ for $M=\sum_{i=1}^n m_{i}$.
\end{proof}

\subsection{A genus two Ward identity}
Define a genus two modular derivative operator 
\begin{align}
\nabla_x=\sum_{1\le a\le b\le 2} \nu_{a}(x)\nu_{b}(x)\frac{\partial}{\partial \Omega_{ab}},
\label{eq:nabla}
\end{align}
for period matrix $\Omega_{ab}$, and normalised holomorphic 1-differentials $\nu_{a}$.
There exists an injective but non-surjective holomorphic map $F^{\Omega }$ from the sewing domain  $\mathcal{D}_{\mathrm{sew}}$  into the Siegel upper half plane \cite{GT1}
\begin{align}
F^{\Omega }:\mathcal{D}_{\mathrm{sew}} &\rightarrow \mathbb{H}_{2}, \notag
\\
(\tau _{1},\tau _{2},\epsilon ) &\mapsto \Omega (\tau _{1},\tau_2,\epsilon),\label{eq:FOm}
\end{align}
Below we will also denote by  $\nabla_x$ the action of $\left(F^{\Omega}\right)^{-1} \circ \nabla_x \circ F^{\Omega}$ on $\mathcal{D}_{\mathrm{sew}}$.

In Section~5 of \cite{GT1} we describe a genus two Ward identity for  the genus two Virasoro $n$-point correlation  function. This is expressed in terms of generalised Weierstrass functions
$\sups{2}\mathcal{P}_{k}(x,y)$ for $k\ge 1$ defined as follows. 
Let $\boldsymbol{\nu}(x)=\left[\nu_1(x),\nu_2(x)\right] $
denote a row vector of holomorphic 1-differentials and define 
\begin{align}
\Psi(x,y)=
\sups{2}\mathcal{P}_{1}(x,y)dx^2(dy)^{-1}=
-\;\frac{
\omega(x,y)\begin{vmatrix}
\boldsymbol{\nu}(x) \\
\boldsymbol{\nu}(y)
\end{vmatrix}
+\begin{vmatrix}
\boldsymbol{\nu}(y)\\
\nabla_x\boldsymbol{\nu}(y)
\end{vmatrix}
}
{
\begin{vmatrix}
\boldsymbol{\nu}(y) \\
 \partial_{y} \boldsymbol{\nu}(y)
\end{vmatrix}
dy
}.
\label{eq:P21form}
\end{align}
%
\begin{proposition}[\cite{GT1}]\label{prop:P21}
$\Psi(x,y)$ is a holomorphic  $(2,-1)$-bidifferential for $x\neq y$ 
where, for any local coordinates $x,y$
\begin{align*}
\Psi(x,y)= \left(\frac{1}{x-y}+\text{regular terms}\right)dx^2(dy)^{-1}.
\end{align*}
\end{proposition}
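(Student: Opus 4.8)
The plan is to establish the two asserted properties of $\Psi(x,y)$ directly from the defining formula \eqref{eq:P21form}, treating the transformation behaviour and the local singularity structure separately. First I would verify the bidifferential weights. The numerator of \eqref{eq:P21form} is built from $\omega(x,y)$ (a $(1,1)$-form), the Wronskian-type determinants of $\boldsymbol{\nu}$, and the operator $\nabla_x$, which by its definition \eqref{eq:nabla} carries weight $(2,0)$ in $x$ since each $\nu_a(x)\nu_b(x)$ is a quadratic differential in $x$; the denominator $\begin{vmatrix}\boldsymbol{\nu}(y)\\ \partial_y\boldsymbol{\nu}(y)\end{vmatrix}dy$ transforms as a cube of $dy$ (the $2\times 2$ Wronskian of 1-differentials is a 3-differential). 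A term-by-term weight count then shows the whole expression transforms as $dx^2(dy)^{-1}$; in particular the apparent $\partial_y$ appearing in $\begin{vmatrix}\boldsymbol{\nu}(y)\\\nabla_x\boldsymbol{\nu}(y)\end{vmatrix}$ combines with the rest so that no anomalous (connection-type) terms survive. This is essentially the observation that the Wronskian construction projectivises away coordinate dependence, so I would phrase it using the standard fact that for 1-differentials $f,g$ the ratio $(f g' - g f')/(\text{another such Wronskian})$ is a genuine tensor.

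Next I would prove holomorphy away from the diagonal. Away from $x=y$, $\omega(x,y)$ is holomorphic and the numerator is manifestly holomorphic in both variables; the only possible poles come from zeros of the denominator $\begin{vmatrix}\boldsymbol{\nu}(y)\\ \partial_y\boldsymbol{\nu}(y)\end{vmatrix}$, i.e.\ Weierstrass points where $\nu_1,\nu_2$ and their derivatives become dependent. To rule these out I would argue that the numerator vanishes to matching order at such points: this is where the identity $\nabla_x \boldsymbol{\nu}(y)$ enters, since $\nabla_x$ is built from $\nu_a(x)\nu_b(x)$ and the Rauch-type variational formula for $\nu_a$ under deformation of $\Omega$ expresses $\nabla_x\nu_b(y)$ in terms of $\omega$ and the $\nu$'s, forcing the requisite cancellation. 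Alternatively — and this is probably cleaner — I would appeal to the fact, presumably established in \cite{GT1}, that $\Psi$ arises as the weight-one generalised Weierstrass function from genus two Zhu recursion, which is a convergent holomorphic object on the sewing domain by construction; then \eqref{eq:P21form} is just its closed form and holomorphy is inherited.

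For the local expansion, I would expand the determinants in a local coordinate near a point $p$ with $x,y$ both close to $p$. Writing $\boldsymbol{\nu}(x) = \boldsymbol{\nu}(y) + (x-y)\partial_y\boldsymbol{\nu}(y) + O((x-y)^2)$, one sees $\begin{vmatrix}\boldsymbol{\nu}(x)\\\boldsymbol{\nu}(y)\end{vmatrix} = -(x-y)\begin{vmatrix}\boldsymbol{\nu}(y)\\\partial_y\boldsymbol{\nu}(y)\end{vmatrix} + O((x-y)^2)$. Combined with the leading singularity $\omega(x,y) = \dfrac{dx\,dy}{(x-y)^2} + O(1)$ from \eqref{omega}, the product $\omega(x,y)\begin{vmatrix}\boldsymbol{\nu}(x)\\\boldsymbol{\nu}(y)\end{vmatrix}$ contributes a simple pole with residue exactly $-\begin{vmatrix}\boldsymbol{\nu}(y)\\\partial_y\boldsymbol{\nu}(y)\end{vmatrix}dx\,dy$ in the numerator; dividing by the denominator $\begin{vmatrix}\boldsymbol{\nu}(y)\\\partial_y\boldsymbol{\nu}(y)\end{vmatrix}dy$ and accounting for the overall minus sign yields the stated leading term $\dfrac{1}{x-y}dx^2(dy)^{-1}$, with the regular tail coming from the $O((x-y)^0)$ pieces of $\omega$ (involving $s(x)$) and the $O((x-y)^2)$ corrections to the determinant. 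The second determinant $\begin{vmatrix}\boldsymbol{\nu}(y)\\\nabla_x\boldsymbol{\nu}(y)\end{vmatrix}$ is regular as $x\to y$ and contributes only to the regular terms.

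\textbf{Main obstacle.} The delicate point is the claim of genuine tensoriality of weight $(2,-1)$: one must check that the $\partial_y$-derivative hidden inside $\nabla_x\boldsymbol{\nu}(y)$ (through the chain rule when $\nu_a(y)$ is re-expressed in a new coordinate) does not introduce Schwarzian- or connection-type corrections, and that these cancel against the coordinate dependence of $\omega(x,y)$ and of the two Wronskians. I expect this to be the bulk of the work; the singularity expansion and holomorphy off the diagonal are then comparatively routine once the tensorial interpretation is in place. If \cite{GT1} already records $\Psi$ as a well-defined bidifferential, I would simply cite that and confine the present proof to re-deriving the closed form \eqref{eq:P21form} and reading off the local behaviour.
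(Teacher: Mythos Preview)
The paper does not prove this proposition at all: it is simply quoted from \cite{GT1} (note the bracketed citation in the proposition header), so there is no in-paper argument to compare against. Your proposal therefore goes well beyond what the present paper does, and the parts you carry out---the weight count yielding $(2,-1)$ and the Taylor expansion of $\begin{vmatrix}\boldsymbol{\nu}(x)\\\boldsymbol{\nu}(y)\end{vmatrix}$ near the diagonal to extract the $\frac{1}{x-y}$ residue---are correct and are presumably close to what \cite{GT1} actually does.

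One point of confusion worth clearing up: there is no ``$\partial_y$-derivative hidden inside $\nabla_x\boldsymbol{\nu}(y)$''. The operator $\nabla_x=\sum\nu_a(x)\nu_b(x)\,\partial/\partial\Omega_{ab}$ is a pure moduli derivative; it acts on the $\Omega$-dependence of the coefficient function of $\nu_c(y)$ and not on $y$ itself. Under a moduli-independent change of local coordinate $y\mapsto\phi(y)$ one has $\nabla_x(f_c\phi')=(\nabla_x f_c)\phi'$, so $\nabla_x\nu_c(y)$ is genuinely a $1$-form in $y$ and the determinant $\begin{vmatrix}\boldsymbol{\nu}(y)\\\nabla_x\boldsymbol{\nu}(y)\end{vmatrix}$ is a $(2,2)$-form in $(x,y)$ with no anomalous terms. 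Your ``main obstacle'' is therefore not an obstacle: the tensoriality is immediate once you observe that $\nabla_x$ commutes with $y$-coordinate changes.

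The genuine gap you correctly identify is holomorphy at the zeros of the Wronskian $\begin{vmatrix}\boldsymbol{\nu}(y)\\\partial_y\boldsymbol{\nu}(y)\end{vmatrix}$ (the six Weierstrass points in genus two). Showing directly that the numerator vanishes to matching order there is nontrivial and you do not carry it out; your fallback---invoke the construction of $\Psi$ in \cite{GT1} as a convergent generalised Weierstrass function arising from Zhu recursion---is precisely what the present paper does for the entire statement.
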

\noindent We also define generalised Weierstrass functions  for $k\ge 1$ by
\begin{align}
\sups{2}\mathcal{P}_{k}(x,y)=\frac{1}{(k-1)!}\partial_{y}^{k-1}\left(\sups{2}\mathcal{P}_{1}(x,y)\,\right)=\frac{1}{(x-y)^k}+\mbox{regular terms},
\label{eq:P2kform}
\end{align}
which is holomorphic for $x\neq y$.  
\begin{proposition}[\cite{GT1}]\label{prop:Vnpt}
$G_n(\mathbf{z})$  obeys the formal   Ward identity
for  $z_1,\ldots,z_n\in \widehat{\mathcal {S}}_1 \cup  \widehat{\mathcal {S}}_2$
and $(\tau _{1},\tau _{2},\epsilon )\in \mathcal{D}_{\mathrm{sew}} $
\begin{align}
G_n(\mathbf{z})
=& \Bigg( \nabla_{z_1} 
+ dz_1^2\sum_{k=2}^n \left( \sups{2}\mathcal{P}_{1}(z_1,z_k) \partial_{z_k} + 2\cdot\sups{2}\mathcal{P}_{2}(z_1,z_k) \right)\Bigg){G}_{n-1}(z_2,\ldots,z_n) 
\notag
\\
&+ \frac{c}{2} \sum_{k=2}^n \sups{2}\mathcal{P}_{4}(z_1,z_k) {G}_{n-2}(z_2,\ldots,\widehat{z}_k,\ldots, z_n)\, dz_1^2dz_k^2,
\label{eq:Ward}
\end{align}
where $\widehat{z}_k$  denotes the omission of the given term. 
\end{proposition}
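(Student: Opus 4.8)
The plan is to derive \eqref{eq:Ward} from the genus two Zhu recursion of \cite{GT1}, applied to the Virasoro $n$-point function \eqref{eq:Virnpt} with the insertion $Y[\omt,z_1]$ taken as the recursion vector. Because $G_n(\mathbf{z})$ is symmetric in its arguments (Proposition \ref{prop_Ggen}), any of the $n$ points may serve as $z_1$, and because $G_n(\mathbf{z})$ is unchanged when an insertion at $z_i\in\widehat{\mathcal{S}}_1$ is replaced by one at $\epsilon/z_i\in\widehat{\mathcal{S}}_2$ (Proposition~6 of \cite{MT3}), it suffices to establish the identity for all $z_1,\dots,z_n\in\widehat{\mathcal{S}}_1$ and then transport it to the general case. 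Concretely, I would apply the genus one Zhu recursion to the left trace $Z^{(1)}_V(Y[\omt,z_1]Y[\omt,z_2]\cdots Y[\omt,z_n]u;\tau_1)$ appearing under the sewing sum, pulling $Y[\omt,z_1]$ past the remaining vertex operators, past the node vector $u$, and around the trace. This produces: local operator-product terms in which $\omt$ is carried to some $z_k$ ($k\ge 2$) by a mode $\omt[m]$ acting on the $k$-th insertion; and terms in which $\omt[m]$ either acts on the node vector $u$ or arises from the cyclicity of the $\tau_1$-trace.

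The operator-product terms are governed by $\omt[m]\omt=L[m-1]\omt$, which vanishes for $m\ge 4$, the surviving modes being
\[
\omt[0]\omt=L[-1]\omt,\qquad \omt[1]\omt=L[0]\omt=2\,\omt,\qquad \omt[2]\omt=L[1]\omt=0,\qquad \omt[3]\omt=L[2]\omt=\tfrac{c}{2}\vac .
\]
Using $Y[L[-1]v,z]=\partial_z Y[v,z]$, the mode $\omt[0]$ turns the $k$-th insertion into $\partial_{z_k}$ acting on $G_{n-1}$, giving the $\sups{2}\mathcal{P}_1(z_1,z_k)\partial_{z_k}$ contribution; the weight eigenvalue $2$ from $\omt[1]$ gives the $2\cdot\sups{2}\mathcal{P}_2(z_1,z_k)$ contribution; the $\omt[2]$ term is absent, so no $\sups{2}\mathcal{P}_3$ term appears; and $\omt[3]$ replaces the $k$-th insertion by $\tfrac{c}{2}\vac$, which deletes it and yields $\tfrac{c}{2}\sups{2}\mathcal{P}_4(z_1,z_k)\,G_{n-2}(z_2,\dots,\widehat{z}_k,\dots,z_n)$. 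That the genus one coefficients $P_{m+1}(z_1-z_k,\tau_1)$ reorganise into the genus two generalised Weierstrass functions $\sups{2}\mathcal{P}_{m+1}(z_1,z_k)$ of \eqref{eq:P2kform} is the $\epsilon$-sewing bookkeeping of \cite{GT1}: the $O(\epsilon^N)$ corrections from the node-vector and cyclicity pieces, resummed over the gluing sum, assemble into the globally defined $\sups{2}\mathcal{P}_{m+1}$.

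The main obstacle is to identify the remaining, $k$-independent, contribution with the modular derivative term $\nabla_{z_1}G_{n-1}(z_2,\dots,z_n)$. Following \cite{MT2,GT1}, one rewrites $\omt[m]u$ for the node vector $u$ by means of the invariant bilinear form $\langle~,~\rangle$ and the Mobius map $z\mapsto\epsilon/z$ attached to the node, so that the relevant sums over $u$ of $Z^{(1)}_V(Y[\omt,z_2]\cdots Y[\omt,z_n]\,\omt[m]u;\tau_1)\,Z^{(1)}_V(\overline{u};\tau_2)$, together with the cyclicity contribution, reduce to a fixed linear combination of $\partial_{\tau_1}$, $\partial_{\tau_2}$ and $\partial_\epsilon$ derivatives of $G_{n-1}$. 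Substituting the explicit dependence of $\omega(x,y)$, $\nu_i(x)$ and $\Omega_{ij}$ on $(\tau_1,\tau_2,\epsilon)$ given by the sewing formulas of \cite{MT2}, and applying the chain rule through the embedding $F^{\Omega}$ of \eqref{eq:FOm}, recasts this combination as $\sum_{1\le a\le b\le 2}\nu_a(z_1)\nu_b(z_1)\,\partial/\partial\Omega_{ab}$ acting on $G_{n-1}$, i.e.\ $\nabla_{z_1}$; the factors $\nu_a(z_1)\nu_b(z_1)$ supply the quadratic differential in $z_1$ that the operator-product terms carry instead as an explicit $dz_1^2$. This is the genus two counterpart of the genus one fact that the cyclicity term for the conformal vector yields the modular derivative $2\pi i\,\partial_\tau$, the bare occurrence of $\nabla_{z_1}$ reflecting that the genus two modular derivative is naturally expressed through the period matrix.

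Finally, multiplying the resulting scalar identity by $d\mathbf{z}^2=dz_1^2\cdots dz_n^2$ — with $\partial_{z_k}$ and $\nabla_{z_1}$ understood to act on the scalar correlation functions while leaving the accompanying differentials intact, so that the last term needs the extra $dz_k^2$ that restores the differential of the deleted insertion — and tracking weights via Proposition \ref{prop:P21} (the $(2,-1)$-bidifferential weight of $\Psi=\sups{2}\mathcal{P}_1\,dx^2(dy)^{-1}$) delivers \eqref{eq:Ward}. Holomorphy for $z_1\neq z_k$ and convergence on $\mathcal{D}_{\mathrm{sew}}$ are inherited from the corresponding properties of the genus two $(n-1)$- and $(n-2)$-point functions and of the $\sups{2}\mathcal{P}_{m+1}$.
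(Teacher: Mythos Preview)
The paper does not give its own proof of this proposition: it is quoted verbatim from \cite{GT1} (note the citation attached to the proposition header) and is used here as an input. So there is no proof in the present paper to compare against.

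That said, your sketch is a faithful outline of how the identity is obtained in \cite{GT1}. The OPE bookkeeping is correct: $\omt[0]\omt=L[-1]\omt$ produces the $\sups{2}\mathcal{P}_1\partial_{z_k}$ term, $\omt[1]\omt=2\omt$ the $2\,\sups{2}\mathcal{P}_2$ term, $\omt[2]\omt=0$ kills any $\sups{2}\mathcal{P}_3$ contribution, and $\omt[3]\omt=\tfrac{c}{2}\vac$ yields the $\tfrac{c}{2}\,\sups{2}\mathcal{P}_4$ term with the $k$-th insertion deleted. Your identification of the residual node/cyclicity contribution with $\nabla_{z_1}$ via the sewing chain rule through $F^{\Omega}$ is also the right mechanism. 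One caveat of presentation: in \cite{GT1} the genus two Zhu recursion is established as a single formal identity at genus two (with the generalised Weierstrass coefficients $\sups{2}\mathcal{P}_k$ appearing directly), rather than by first applying genus one Zhu recursion on the left trace and then ``reorganising'' the elliptic $P_{m+1}$ into $\sups{2}\mathcal{P}_{m+1}$ by resumming over the node. Your description conflates the derivation of the general recursion with its specialisation to $\omt$; in practice one simply invokes the genus two recursion of \cite{GT1} for a weight~$2$ recursion vector and reads off \eqref{eq:Ward} from the Virasoro OPE, which is shorter and avoids having to redo the sewing analysis.
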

We note that the above expression for $G_n(\mathbf{z})$ is not manifestly symmetric in its arguments.

\subsection{Some analytic differential equations}
For the genus two bidifferential $\omega(x,y)$, normalised holomorphic 1-differentials $\nu_{a}(x)$ for $a=1,2$ and the projective connection $s(x)$ we find from Section~6 of \cite{GT1} that 
\begin{proposition}\label{diffeqn}
$\omega(x,y)$, $\nu_{a}(x)$ for a=1,2 and  $s(x)$  satisfy the following analytic differential equations 
\begin{align}
&\Big( \nabla_x + dx^2 \sum_{r=1}^2 \left( \sups{2}\mathcal{P}_{1}(x,y_r)\partial_{y_r} + \sups{2}\mathcal{P}_{2}(x,y_r) \right) \Big) \omega(y_1,y_2)
=  \omega(x,y_1)\omega(x,y_2),
\label{eq:omDE}
\\
&\Big(\nabla_x +  dx^2 \left(\sups{2}\mathcal{P}_{1}(x,y) \partial_y +\sups{2}\mathcal{P}_{2}(x,y) \right)\Big) \nu_a(y)
=
\omega(x,y) \nu_a(x), 
\label{eq:nuDE}
\\
&\Big(\nabla_x +  dx^2 \left(\sups{2}\mathcal{P}_{1}(x,y) \partial_y +2\sups{2}\mathcal{P}_{2}(x,y) \right)\Big)  s(y) + 6\sups{2}\mathcal{P}_{4}(x,y)dx^2dy^2
=
6\,\omega(x,y) ^2, 
\label{eq:SDE}
\end{align}
for all $x,y_1,y_2\in \widehat{\mathcal{S}}_1 \cup \widehat{\mathcal{S}}_2 $  and $\Omega\in\Fim$.
\end{proposition}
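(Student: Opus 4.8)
The plan is to derive each of \eqref{eq:omDE}, \eqref{eq:nuDE}, \eqref{eq:SDE} from the Ward identity \eqref{eq:Ward} of Proposition~\ref{prop:Vnpt} specialised to small $n$, combined with known low-point genus two correlation functions for the Heisenberg VOA $M$. The underlying principle is that for the rank one Heisenberg VOA, the Virasoro $n$-point functions $G_n(\mathbf z)$ are completely determined by Wick's theorem in terms of $\omega(x,y)$, $\nu_a(x)$ and $s(x)$: one has $G_0 = Z_M^{(2)}$, the normalised one-point function $G_1(z)/Z_M^{(2)} = \tfrac12 s(z) + \nabla_{z}\log Z_M^{(2)}\cdot(\text{something})$ reflecting $\langle\omt\rangle$, and the two-point function $G_2(z_1,z_2)/Z_M^{(2)}$ is a quadratic expression in $\omega(z_1,z_2)$ plus lower terms — precisely the content encoded in $\omega(x,y)\omega(x,y')$ on the right of \eqref{eq:omDE}. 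So the strategy is: feed the explicit Heisenberg data into the Ward identity for $n=1,2,3$, and read off the differential equations for $\omega$, $\nu_a$, $s$ as the ``coefficient'' equations that must hold for the Ward recursion to be consistent.

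\textbf{Step 1 (the $\omega$ equation).} I would apply the Ward identity with $n=3$ to relate $G_3$ to $G_2$ and $G_1$, and separately apply it with $n=2$ to relate $G_2$ to $G_1$ and $G_0$. For the Heisenberg VOA, the normalised functions $G_n/Z_M^{(2)}$ are explicitly the elementary Wick contractions; subtracting off the known $G_1$ and $G_0$ contributions and comparing the pieces of $G_3$ that are bilinear in $\omega$, the $\nabla_x$ acting on $\omega(y_1,y_2)$ must produce exactly $\omega(x,y_1)\omega(x,y_2)$ after the $\sups{2}\mathcal P_1,\sups{2}\mathcal P_2$ terms are subtracted. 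This is essentially the statement that the genus two heat-kernel/variational formula for $\omega$ under deformation of the period matrix, transported back to $\mathcal D_{\mathrm{sew}}$ via $F^\Omega$, is governed by $\Psi$. Concretely, one uses the Rauch-type variational formula $\partial_{\Omega_{ab}}\omega(y_1,y_2) = \tfrac12\nu_a(y_1)\nu_b(y_2)+\tfrac12\nu_b(y_1)\nu_a(y_2)$ (up to normalisation), rewrite $\nabla_x$ using \eqref{eq:nabla}, and then verify that the bidifferential identity reduces to the defining relation \eqref{eq:P21form} for $\Psi=\sups{2}\mathcal P_1$. This is the cleanest of the three and I would prove it first, since \eqref{eq:nuDE} follows by integrating \eqref{eq:omDE} over a $\beta$-cycle in one variable (using $\nu_a(y)=\oint_{\beta^a}\omega(y,\cdot)$ and that $\oint_{\beta^a}\Psi(x,\cdot)$ behaves correctly), and \eqref{eq:SDE} follows from the $(x-y)^0$ and subleading terms in the expansion of \eqref{eq:omDE} as $y_1\to y_2$, feeding in \eqref{omega}.

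\textbf{Step 2 (deriving $\nu_a$ and $s$ from $\omega$).} For \eqref{eq:nuDE}: apply $\oint_{\beta^b}(\cdot)\,$ in the variable $y_1$ (say) to \eqref{eq:omDE}. On the right we get $\omega(x,y_2)\oint_{\beta^b}\omega(x,\cdot)=\omega(x,y_2)\nu_b(x)$ — wait, more carefully $\oint_{\beta^b}\omega(y_1,y_2)\,$ in $y_1$ gives $\nu_b(y_2)$, and one must check the operator $\nabla_x + dx^2(\sups{2}\mathcal P_1\partial + \sups{2}\mathcal P_2)$ commutes with the cycle integral; the $\sups{2}\mathcal P_2$ coefficient on the $\nu$-equation is $1$ rather than the $2$ appearing in the $G_n$ Ward identity because only one puncture is ``active'', matching the single $\sups{2}\mathcal P_2$ in \eqref{eq:omDE}. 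For \eqref{eq:SDE}: expand \eqref{eq:omDE} in powers of $(y_1-y_2)$, use \eqref{omega} to extract $s$, and the $\sups{2}\mathcal P_4$ term together with the central charge factor $6=\tfrac{c}{2}\big|_{c=?}$ — here the $6$ is geometric, coming from the $\tfrac16 s$ normalisation in \eqref{omega} and the $\partial_y^3$ in passing from $\sups{2}\mathcal P_1$ to $\sups{2}\mathcal P_4$ — emerges from the most singular comparison. I would track the Laurent coefficients in $(y_1-y_2)$ up to order $0$ and $1$, matching the $O((y_1-y_2))$ term of $s$ in \eqref{omega} as a consistency check on the $\partial_y s$ piece.

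\textbf{Main obstacle.} The delicate point is the transport of $\nabla_x$ under $F^\Omega$ and the verification that the resulting operator on $\mathcal D_{\mathrm{sew}}$ genuinely annihilates the difference between the two sides, i.e.\ that the identities hold as analytic functions on all of $\Fim$ and not merely as formal $\epsilon$-expansions. Since $F^\Omega$ is injective but not surjective, one cannot simply argue on $\HH_2$; instead I would prove the identities first as formal power series in $\epsilon$ (where the Heisenberg Wick calculus and the explicit genus one Zhu recursion give everything term by term), then invoke the established convergence of the Heisenberg genus two correlation functions on $\mathcal D_{\mathrm{sew}}$ from \cite{MT3} and analytic continuation to upgrade to genuine analytic identities. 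A secondary technical nuisance is bookkeeping the $dx^2$, $dy^2$ weights so that both sides are bidifferentials of the correct bidegree — in particular that $\Psi(x,y)\partial_y$ acting on a $1$-differential in $y$ lands in $1$-differentials in $y$ while picking up $dx^2$ — but this is routine once the weight conventions of \eqref{eq:P21form}–\eqref{eq:P2kform} are fixed.
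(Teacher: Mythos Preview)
The paper does not actually prove this proposition: it is quoted from Section~6 of \cite{GT1} without argument. So there is no in-paper proof to compare your Step~1 against. That said, your Step~2 reductions are exactly what the paper carries out in the proofs of Corollaries~\ref{diffeqn_indep} and~\ref{Seqn_indep}: the $\nu_a$ equation is obtained from the $\omega$ equation by integrating one variable over a $\beta$-cycle, and the $s$ equation by setting $y_1=y+\varepsilon$, $y_2=y$, expanding via \eqref{omega}, and taking $\varepsilon\to 0$. So that part of your plan matches the paper precisely.

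For \eqref{eq:omDE} itself, your Heisenberg-Ward strategy is sound and is almost certainly what \cite{GT1} does, but one remark in your write-up is wrong and would derail you if you leaned on it. The ``Rauch-type variational formula'' you quote,
\[
\partial_{\Omega_{ab}}\omega(y_1,y_2)=\tfrac12\bigl(\nu_a(y_1)\nu_b(y_2)+\nu_b(y_1)\nu_a(y_2)\bigr),
\]
is not correct as a bare identity: the moduli derivative of $\omega$ depends on the choice of local trivialisation of the family of curves, and the $\sups{2}\mathcal{P}_1$, $\sups{2}\mathcal{P}_2$ terms on the left of \eqref{eq:omDE} are exactly the connection terms encoding that choice. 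If you try to verify \eqref{eq:omDE} by plugging this formula into $\nabla_x$ and hoping the $\sups{2}\mathcal{P}_k$ pieces cancel against something from \eqref{eq:P21form}, you will find they do not. The reliable route is the one you sketch first: feed the explicit Heisenberg correlators (Wick sums of $\omega$'s, from \cite{MT3}) into the genus two Zhu/Ward recursion of \cite{GT1} and read off \eqref{eq:omDE} after dividing through by $Z_M^{(2)}$. That recursion already has the correct connection built in, so no separate variational input is needed.
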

We may generalise \eqref{eq:omDE} and \eqref{eq:nuDE} in the following coordinate independent way:
\begin{corollary}\label{diffeqn_indep}
$\omega(x,y)$ and  $\nu_{a}(x)$ for a=1,2  satisfy the following coordinate independent analytic differential equations for all  $\Omega\in\HH_2$
\begin{align}
\nabla_x \,\omega(y_1,y_2) +  \sum_{r=1}^2 \partial_{y_r}\left( \Psi(x,y_r) \omega(y_1,y_2)\right) dy_r
&=  \omega(x,y_1)\omega(x,y_2),
\label{eq:omDE_indep}
\\
 \nabla_x \,\nu_a(y)+   \partial_y\left( \Psi(x,y) \nu_a(y)\right) dy
&=
\omega(x,y) \nu_a(x). 
\label{eq:nuDE_indep}
\end{align}
\end{corollary}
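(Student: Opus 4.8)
The plan is to derive the coordinate-independent equations \eqref{eq:omDE_indep} and \eqref{eq:nuDE_indep} from the coordinate-dependent versions \eqref{eq:omDE} and \eqref{eq:nuDE} of Proposition~\ref{diffeqn} by repackaging the $\sups{2}\mathcal{P}_1$ and $\sups{2}\mathcal{P}_2$ terms into a single total derivative involving $\Psi(x,y)$. The key observation is the relation \eqref{eq:P2kform}: since $\sups{2}\mathcal{P}_2(x,y) = \partial_y\,\sups{2}\mathcal{P}_1(x,y)$ (in the scalar normalisation, before attaching differentials), the two terms $\sups{2}\mathcal{P}_1(x,y)\partial_y f(y) + \sups{2}\mathcal{P}_2(x,y) f(y)$ combine, by the Leibniz rule, into $\partial_y\!\left(\sups{2}\mathcal{P}_1(x,y) f(y)\right)$ whenever $f$ is a scalar quantity of the appropriate weight. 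First I would make precise the dictionary between the scalar $\sups{2}\mathcal{P}_k(x,y)$ and the bidifferential $\Psi(x,y) = \sups{2}\mathcal{P}_1(x,y)\,dx^2(dy)^{-1}$, noting that $\Psi(x,y)\nu_a(y)$ is a genuine $(2,0)$-bidifferential in $x$ and $y$ (the $(dy)^{-1}$ cancels the $dy$ in $\nu_a$), so that $\partial_y\!\left(\Psi(x,y)\nu_a(y)\right)dy$ is a well-defined $dx^2$-valued $1$-form in $y$; similarly for $\Psi(x,y_r)\omega(y_1,y_2)$.

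The main steps are then as follows. For \eqref{eq:nuDE_indep}: write $\nu_a(y) = \widehat{\nu}_a(y)\,dy$ for the scalar component $\widehat{\nu}_a$ in the local coordinate $y$, observe that $dx^2\bigl(\sups{2}\mathcal{P}_1(x,y)\partial_y\widehat\nu_a(y) + \sups{2}\mathcal{P}_2(x,y)\widehat\nu_a(y)\bigr) = dx^2\,\partial_y\!\bigl(\sups{2}\mathcal{P}_1(x,y)\widehat\nu_a(y)\bigr)$ by Leibniz together with $\sups{2}\mathcal{P}_2 = \partial_y\,\sups{2}\mathcal{P}_1$, and then reinstate the $dy$ factors to recognise this as $\partial_y\!\left(\Psi(x,y)\nu_a(y)\right)dy$. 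The factor $\nabla_x$ and the right-hand side $\omega(x,y)\nu_a(x)$ are unchanged. This turns \eqref{eq:nuDE} directly into \eqref{eq:nuDE_indep}. For \eqref{eq:omDE_indep} the argument is identical but applied in each variable $y_r$ separately to the sum over $r$, using that $\omega(y_1,y_2)$ is a $(1,1)$-form so that $\sups{2}\mathcal{P}_1(x,y_r)\,\omega(y_1,y_2)$ with a $dx^2(dy_r)^{-1}$ attached is a $(2,1)$-form in $(x,y_{3-r})$-and-$y_r$, making $\partial_{y_r}\!\left(\Psi(x,y_r)\omega(y_1,y_2)\right)dy_r$ well-defined. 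Finally, I would address the enlargement of the domain: Proposition~\ref{diffeqn} asserts the identities for $\Omega\in\Fim$, the image of the sewing map, whereas the corollary claims them for all $\Omega\in\HH_2$. This follows because both sides of \eqref{eq:omDE_indep} and \eqref{eq:nuDE_indep} are, by Proposition~\ref{prop:P21} and the standard theory of the period matrix, real-analytic (indeed holomorphic in the appropriate sense) functions of $\Omega$ on all of $\HH_2$, and $\Fim$ is a nonempty open subset; an identity between real-analytic functions that holds on a nonempty open set holds on the connected domain $\HH_2$ by analytic continuation.

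The step I expect to require the most care is the bookkeeping of differentials in the total-derivative rewriting — specifically, checking that $\partial_{y}\!\left(\Psi(x,y)\nu_a(y)\right)dy$ really does reproduce the combination $dx^2\bigl(\sups{2}\mathcal{P}_1(x,y)\partial_y\nu_a(y) + \sups{2}\mathcal{P}_2(x,y)\nu_a(y)\bigr)$ with no leftover connection-type terms. Because $\Psi(x,y)\nu_a(y)$ is a tensor (a section of a fixed line bundle in $y$), the ordinary coordinate derivative $\partial_y$ acting on its scalar representative is coordinate-covariant here only after multiplication by $dy$, so one must verify that the weight-$(-1)$ part of $\Psi$ in $y$ exactly cancels the weight of $\nu_a$; this is the content of Proposition~\ref{prop:P21}, which guarantees $\Psi(x,y)$ has the clean $\tfrac{1}{x-y}dx^2(dy)^{-1}$ local form. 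Once this tensorial consistency is in place, the remaining manipulations are the routine Leibniz identity and the analytic-continuation argument for the domain.
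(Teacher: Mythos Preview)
Your proposal is correct and follows essentially the same approach as the paper: rewrite the $\sups{2}\mathcal{P}_1\partial_y + \sups{2}\mathcal{P}_2$ combination as a total $y$-derivative of $\Psi$ times the relevant form (using the tensorial nature of $\Psi(x,y)\omega(y_1,y_2)$ and $\Psi(x,y)\nu_a(y)$), and then analytically continue from $\Fim$ to all of $\HH_2$. The one minor difference is that the paper obtains \eqref{eq:nuDE_indep} by integrating \eqref{eq:omDE_indep} in $y_2$ over the $\beta^a$ cycle rather than by repeating the Leibniz argument directly on \eqref{eq:nuDE}; both routes are equally valid.
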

\begin{proof}
$\Psi(x,y_1) \omega(y_1,y_2)$ is a global $(2,0,1)$-form in $(x,y_1,y_2)$ (with a similar statement for $\Psi(x,y_2) \omega(y_1,y_2)$). Thus 
\begin{align*}
 d_{y_r}\left( \Psi(x,y_r) \omega(y_1,y_2)\right)=\partial_{y_r}\left( \Psi(x,y_r) \omega(y_1,y_2)\right) dy_r,
\end{align*}
is a global  $(2,1,1)$-form. Hence \eqref{eq:omDE} can be expressed by \eqref{eq:omDE_indep} in a coordinate independent way for all $\Omega\in\Fim$.  But all parts of \eqref{eq:omDE_indep} are holomorphic for all $\Omega\in \HH_2$ and hence the identity can be analytically extended from $ \Fim$ to $\HH_2$.
\eqref{eq:nuDE_indep} follows from \eqref{eq:omDE_indep} by integrating $y_2$ along the $\beta^a$ homology cycle.
\end{proof}
We may also generalise \eqref{eq:SDE} in the following way:
\begin{corollary}\label{Seqn_indep}
$s(y)$, for a given choice of local coordinate $y$, satisfies the following  analytic differential equation  for all  $\Omega\in\HH_2$
\begin{align}
&\Big(\nabla_x +  dy \left(\Psi(x,y) \partial_y +2 \,\partial_y\Psi(x,y) \right)\Big) s(y)+ dy^3\,\partial_y^3\Psi(x,y)
=
6\,\omega(x,y)^2.
\label{eq:SDE_indep}
\end{align}
\end{corollary}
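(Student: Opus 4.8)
The plan is to derive \eqref{eq:SDE_indep} from \eqref{eq:SDE} by the same mechanism used in the proof of Corollary~\ref{diffeqn_indep}: recognise the coordinate-dependent combination $dx^2\left(\sups{2}\mathcal{P}_{1}(x,y)\partial_y + 2\sups{2}\mathcal{P}_{2}(x,y)\right)s(y) + 6\sups{2}\mathcal{P}_{4}(x,y)\,dx^2dy^2$ appearing in \eqref{eq:SDE} as the expansion in a chosen coordinate $y$ of the intrinsically defined operator on the left of \eqref{eq:SDE_indep}, and then invoke analytic continuation from $\Fim$ to all of $\HH_2$. The first step is bookkeeping on the Weierstrass functions: by definition $\Psi(x,y)=\sups{2}\mathcal{P}_1(x,y)\,dx^2(dy)^{-1}$ and $\sups{2}\mathcal{P}_k(x,y)=\frac{1}{(k-1)!}\partial_y^{k-1}\sups{2}\mathcal{P}_1(x,y)$, so $dy^2\,\sups{2}\mathcal{P}_2(x,y) = dy\cdot\partial_y\!\big(\Psi(x,y)(dx^2)^{-1}\big)\cdot dx^2$ up to the usual identification, i.e. the $2\sups{2}\mathcal{P}_2$ term matches $2\,dy\,\partial_y\Psi(x,y)$ acting on $s(y)$, while $6\,dy^3\,\sups{2}\mathcal{P}_4(x,y)\,dx^2 = dy^3\,\partial_y^3\big(\Psi(x,y)(dx^2)^{-1}\big)dx^2 = dy^3\,\partial_y^3\Psi(x,y)$ since $6 = 3!$ and $\sups{2}\mathcal{P}_4 = \frac{1}{3!}\partial_y^3\sups{2}\mathcal{P}_1$. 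Thus \eqref{eq:SDE} is literally \eqref{eq:SDE_indep} rewritten, valid a priori for $\Omega\in\Fim$.

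Next I would address the extension to all of $\HH_2$. The right-hand side $6\,\omega(x,y)^2$ is holomorphic in $\Omega$ on all of $\HH_2$, as is $\omega(x,y)$ itself and hence $\Psi(x,y)$ (this is implicit in Corollary~\ref{diffeqn_indep}, where the same continuation argument is made for $\omega$ and $\nu_a$). The operators $\nabla_x$, $\partial_y$, and multiplication by $\Psi(x,y)$ or its $y$-derivatives all preserve holomorphic dependence on $\Omega$, and $s(y)$ — the projective connection — depends holomorphically on $\Omega\in\HH_2$. Therefore both sides of \eqref{eq:SDE_indep} are holomorphic functions of $\Omega$ on the connected domain $\HH_2$; since they agree on the nonempty open subset $\Fim\subset\HH_2$, they agree everywhere by the identity theorem.

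A point requiring a little care is the role of the fixed local coordinate $y$: unlike \eqref{eq:omDE_indep} and \eqref{eq:nuDE_indep}, the identity \eqref{eq:SDE_indep} is not coordinate independent, because $s(y)$ transforms with a Schwarzian anomaly \eqref{eq:sphi} and $\partial_y^3\Psi(x,y)$ (as opposed to a covariant derivative) is also coordinate dependent. I would note that the statement is simply asserted for a given choice of $y$, and that the non-covariant inhomogeneous term $dy^3\,\partial_y^3\Psi(x,y)$ is precisely what is needed to compensate: one could, if desired, check that under $y\mapsto\phi(y)$ the anomaly $\{\phi(y),y\}\,dy^2$ produced by the left-hand side operator acting on $s$ is cancelled by the change in $\partial_y^3\Psi$, but this is not needed for the proof — the identity holds in each fixed coordinate and that is all that is claimed.

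The main obstacle, such as it is, is not conceptual but notational: one must be scrupulous about the $(2,-1)$-bidifferential nature of $\Psi(x,y)$ when converting between the Weierstrass-function form of \eqref{eq:SDE} and the differential-operator form of \eqref{eq:SDE_indep}, making sure the powers of $dx^2$ and $dy$ balance correctly at each stage — in particular that $\partial_y\Psi(x,y)$ and $\partial_y^3\Psi(x,y)$ are interpreted with the scalar coefficient $\sups{2}\mathcal{P}_k(x,y)$ differentiated, times $dx^2(dy)^{-1}$, so that $dy\,\partial_y\Psi$ and $dy^3\,\partial_y^3\Psi$ are genuine $(2,\cdot)$-objects. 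Once the identification \eqref{eq:SDE}$\Leftrightarrow$\eqref{eq:SDE_indep} on $\Fim$ is in hand, the analytic continuation is immediate and the proof is complete.
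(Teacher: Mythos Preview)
Your proof is correct but follows a different route from the paper's. You derive \eqref{eq:SDE_indep} directly from \eqref{eq:SDE} by rewriting the generalised Weierstrass functions $\sups{2}\mathcal{P}_{k}$ in terms of $\Psi$ and its $y$-derivatives, and then analytically continue from $\Fim$ to $\HH_2$. The paper instead derives \eqref{eq:SDE_indep} as a \emph{limit} of the already-established coordinate-independent identity \eqref{eq:omDE_indep}: setting $y_2=y$, $y_1=y+\varepsilon$, Taylor-expanding $\omega(y_1,y_2)$ (which yields $s(y)$ via \eqref{omega}) and $\Psi(x,y_1)$ in $\varepsilon$, and reading off the $\varepsilon^0$ term. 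Your approach is more elementary and direct --- essentially a change of notation followed by the identity theorem --- while the paper's approach is more unified, exhibiting \eqref{eq:SDE_indep} as a degeneration of \eqref{eq:omDE_indep} and thereby inheriting the extension to all of $\HH_2$ automatically without a separate continuation step. Both are valid; the paper's limit argument also makes transparent why the non-covariant term $dy^3\,\partial_y^3\Psi(x,y)$ appears, as the residue of the singular part of $\omega(y_1,y_2)$ against the Taylor expansion of $\Psi$.
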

\begin{proof}
We let $y_2=y$ and $y_1=y+\varepsilon$ and note from \eqref{omega} that 
\begin{align*}
\omega(y_1,y_2)&=\frac{dy^2}{\varepsilon^2}+\frac{1}{6}s(y)+O(\varepsilon),
\\
\partial_{y_1}\omega(y_1,y_2)&=-\frac{2dy^2}{\varepsilon^3}+\frac{1}{12}\partial_y s(y)+O(\varepsilon),
\\
\partial_{y_2}\omega(y_1,y_2)&=\frac{2dy^2}{\varepsilon^3}+\frac{1}{12}\partial_y s(y)+O(\varepsilon),
\\
\Psi(x,y_1)&=\Psi(x,y)+\varepsilon\partial_y\Psi(x,y) 
+\frac{\varepsilon^2}{2}\partial_y^2\Psi(x,y)+\frac{\varepsilon^3}{6}\partial_y^3\Psi(x,y)+O(\varepsilon^4),
\\
\partial_y\Psi(x,y_1)&=\partial_y\Psi(x,y)+\varepsilon\partial_y^2\Psi(x,y)+\frac{\varepsilon^2}{2}\partial_y^3\Psi(x,y)+O(\varepsilon^3).
\end{align*}
The result follows by substituting the above into \eqref{eq:omDE_indep} and taking
the $\varepsilon\rightarrow 0$ limit. 
\end{proof}

We finally note that the genus two partition function $Z_M^{(2)}(\tau_1,\tau_2,\epsilon)$  for the Heisenberg VOA $M$ obeys \cite{GT1}
\begin{proposition}\label{prop:ZM}
$Z_M^{(2)}(\tau_1,\tau_2,\epsilon)$ is holomorphic for $(\tau_1,\tau_2,\epsilon)\in \mathcal{D}_{\mathrm{sew}}$ and satisfies 
\begin{align}
\nabla_x Z_M^{(2)}=\frac{1}{12}s(x)Z_M^{(2)},
\label{eq:DZM}
\end{align}
for $x \in \widehat{\mathcal{S}}_1 \cup \widehat{\mathcal{S}}_2 $. 
\end{proposition}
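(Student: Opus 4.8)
\emph{Strategy.} I would establish the two assertions separately, treating the holomorphy as imported analysis and the differential equation as an instance of genus two Zhu recursion. For the holomorphy I would quote \cite{MT2,MT3}: there the sum \eqref{eq:Zdef} defining $Z_M^{(2)}$ is shown to converge absolutely and uniformly on compact subsets of $\mathcal{D}_{\mathrm{sew}}$, with a closed form of the shape $Z_M^{(2)}=Z_M^{(1)}(\tau_1)Z_M^{(1)}(\tau_2)\,\det\!\big(I-A_1A_2\big)^{-1/2}$ for explicit infinite matrices $A_a=A_a(\tau_a,\epsilon)$ whose product has operator norm strictly less than $1$ throughout $\mathcal{D}_{\mathrm{sew}}$; hence $Z_M^{(2)}$ is holomorphic and nowhere zero there.

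\emph{Reduction to a one-point function.} The $n=1$ case of Proposition~\ref{prop:Vnpt} (equivalently, the genus two Zhu recursion of \cite{GT1} applied to a single insertion of $\omt$ with no spectator vectors: both sums in \eqref{eq:Ward} are then empty and $G_0=Z_V^{(2)}$) gives, for every VOA $V$ of strong CFT-type,
\[
\nabla_x Z_V^{(2)}=Z_V^{(2)}(\omt,x)\,dx^2 ,
\]
so it is enough to evaluate $Z_M^{(2)}(\omt,x)$. Let $a$ be the weight-one Heisenberg generator of $M$, normalised so $\langle a,a\rangle=1$. A short computation in the square-bracket formalism gives $a[1]a=\vac$, $a[0]a=0$ and $a[-1]a=a(-1)a-\tfrac1{12}\vac$, hence (recalling $\omega=\tfrac12 a(-1)^2\vac$ and $c=1$) $\omt=\tfrac12\,a[-1]a[-1]\vac=\tfrac12\,a(-1)^2\vac-\tfrac1{24}\vac$; in particular the $-c/24$ shift in $\omt=\omega-\tfrac{c}{24}\vac$ is produced automatically.

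\emph{Evaluation.} By genus two associativity of correlation functions (cf.\ \cite{GT1,MT3}) the expansion of $Z_M^{(2)}(a,x;a,y)$ about $x=y$ has leading term $Z_M^{(2)}(a[1]a,y)(x-y)^{-2}=Z_M^{(2)}(x-y)^{-2}$, no simple pole, and constant term $Z_M^{(2)}(a[-1]a,y)$, so that
\[
Z_M^{(2)}(\omt,x)=\tfrac12\,Z_M^{(2)}(a[-1]a,x)=\tfrac12\lim_{y\to x}\Big(Z_M^{(2)}(a,x;a,y)-\tfrac{Z_M^{(2)}}{(x-y)^2}\Big).
\]
I would then invoke the Heisenberg two-point identity $Z_M^{(2)}(a,x;a,y)\,dx\,dy=\omega(x,y)\,Z_M^{(2)}$ from \cite{MT3}; substituting the expansion \eqref{omega} of $\omega(x,y)$, the double pole cancels and the $y\to x$ limit leaves $\tfrac16 s(x)$, whence $Z_M^{(2)}(\omt,x)\,dx^2=\tfrac1{12}\,s(x)\,Z_M^{(2)}$, which with the previous display is \eqref{eq:DZM}. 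I expect the main obstacle to be precisely the imported analytic input of \cite{MT2,MT3} — convergence of the sewing sum and the closed evaluation of the two-point function; the rest is bookkeeping, the key point being that $a[0]a=0$ together with $\omt=\tfrac12 a[-1]a[-1]\vac$ makes the coincident limit produce exactly $\tfrac16 s(x)$ with no stray multiple of $\vac$, and that the projective-connection transformation law \eqref{eq:sphi} matches the anomalous transformation of the $\omt$ one-point function. (A more computational alternative, avoiding Zhu recursion, would be to differentiate the closed form of $Z_M^{(2)}$ directly using the variational formula for the map $F^{\Omega}$ of \eqref{eq:FOm}, but this is considerably more cumbersome.)
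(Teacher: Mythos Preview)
The paper does not prove this proposition; it simply imports it from \cite{GT1} (see the sentence immediately preceding the statement). So there is no in-paper argument to compare against, and your proposal is supplying a proof where the paper offers only a citation.

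On its own merits your argument is sound. The $n=1$ instance of the Ward identity \eqref{eq:Ward} indeed collapses to $G_1(z_1)=\nabla_{z_1}Z_V^{(2)}$ (the paper states this explicitly just after \eqref{eq:On}), and this step does not presuppose \eqref{eq:DZM}, so there is no circularity. The identification $\omt=\tfrac12\,a[-1]^2\vac$ is immediate from the fact that $(V,Y[\,\cdot\,,\cdot\,],\vac,\omt)$ is an isomorphic Heisenberg VOA with conformal vector $\omt$, and the coincident-limit extraction of $\tfrac16 s(x)$ from the bidifferential expansion \eqref{omega} via the two-point identity $Z_M^{(2)}(a,x;a,y)\,dx\,dy=\omega(x,y)\,Z_M^{(2)}$ of \cite{MT3} is exactly the right mechanism. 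The holomorphy claim is, as you say, precisely the convergence theorem of \cite{MT2,MT3}. One small point of presentation: you could streamline by noting directly that $\omt=\tfrac12 a[-1]^2\vac$ in the square-bracket VOA, rather than deriving it from the round-bracket relation $a[-1]a=a(-1)a-\tfrac1{12}\vac$; the latter is correct but unnecessary once you remember the two VOA structures are isomorphic with $\omt$ as the new Virasoro vector.

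Your proposed alternative of differentiating the closed determinantal formula for $Z_M^{(2)}$ directly is also viable and is in fact closer in spirit to how such identities are sometimes established in \cite{MT3}; but the route you chose, via Zhu recursion and the Heisenberg two-point function, is cleaner and is almost certainly the argument used in \cite{GT1}.
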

\begin{remark}
\label{rem:ZM}
$Z_M^{(2)}(\tau_1,\tau_2,\epsilon)$ can be considered as a holomorphic function on $\Fim$ but \textbf{cannot} be analytically continued to the full Siegel upper half plane $\HH_2$ (cf. Theorem~7.2, \cite{GT1}). In physics, this follows from the conformal anomaly (e.g. \cite{FS}) which for the Heisenberg VOA is believed to be related to the non-existence of a global section of certain determinant line bundles on the genus two Riemann surface \cite{Mu2}.
\end{remark}
\eqref{eq:omDE}-\eqref{eq:DZM} are the genus two analogues of differential equations for elliptic and modular functions described in \cite{HT1}. Thus \eqref{eq:DZM} corresponds to
\begin{align*}
q\frac{\partial}{\partial q}\left(\frac{1}{ \eta(q)}\right)=\frac{1}{2}E_2(q) \left(\frac{1}{ \eta(q)}\right),
\end{align*}
for the weight 2 quasi-modular Eisenstein series $E_2(q) =-\frac{1}{12}+2\sum_{m,n\ge 1}nq^{mn}$.

\medskip

\subsection{The main theorem}\label{ssect:graphs}
We show below in Theorem~\ref{theor:main} how to express $G_n(\mathbf{z})$ in a manifestly symmetric fashion as a sum of weights of appropriate graphs. 
The graph configurations are precisely those exploited in \cite{HT1} to describe genus one Virasoro $n$-point functions and many of the arguments below mirror the genus one case. However, the graph weights are differently defined in the genus two case and the technicalities are more involved. 
Furthermore, the genus two graph weights for $G_n(\mathbf{z})$ are described in terms of a linear  differential operator $\cO_n(\mathbf{z})$ which is symmetric in its arguments and  possesses fundamental properties under analytic and genus two $\Sp(4,\Z)$ modular transformations. 

Define, for central charge $c$, a formal normalised partition function
\begin{align}
\Th(\tau_1,\tau_2,\epsilon): &= Z_M^{(2)}(\tau_1,\tau_2,\epsilon)^{-c}Z_V^{(2)}(\tau_1,\tau_2,\epsilon),
\label{eq:ThetaV}
\end{align}
where $Z_M^{(2)}(\tau_1,\tau_2,\epsilon)$ is the genus two partition function for the Heisenberg VOA (which is holomorphic on $\mathcal{D}_{\mathrm{sew}}$). Following Remark~\ref{rem:ZM} and \cite{FS} we conjecture:
\begin{conjecture}\label{conj:ThV}
$\Th$ is holomorphic on $\HH_2$ for  a  $C_2$-cofinite VOA $V$. If $V$ is also rational then $\Th$
is a component of a vector valued Siegel modular form of weight $c/2$. 
\end{conjecture}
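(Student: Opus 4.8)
The plan is to treat the two assertions in turn, in each case bootstrapping from the genus one theory through the genus two Ward identity of Proposition~\ref{prop:Vnpt}. For the holomorphicity claim the first step is to use $C_2$-cofiniteness to close the family $\{G_n(\mathbf z)\}$ into a holonomic system: since $V/C_2(V)$ is finite dimensional, repeated use of the genus two Zhu recursion underlying \eqref{eq:Ward} expresses every sufficiently high Virasoro $n$-point correlator as a differential-operator combination of finitely many lower ones, and hence yields a finite-order Siegel-modular partial differential equation system in $\Omega$ satisfied by $\Th$ on $\Fim$ (as the sequel \cite{GT2} carries out for the $(2,5)$ minimal model), its coefficients assembled from $\omega(x,y)$, $\nu_a(x)$, $s(x)$ and the generalised Weierstrass functions $\sups{2}\mathcal{P}_{k}(x,y)$. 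The second step is the observation, already implicit in Corollaries~\ref{diffeqn_indep} and~\ref{Seqn_indep} and in Theorem~\ref{th:OnMod}, that once these data are re-expressed through $\Psi(x,y)$ the coefficients extend holomorphically to all of $\HH_2$, not merely to $\Fim$. The third step is to continue the solution: a holomorphic solution of a holonomic system with holomorphic coefficients extends along paths in the contractible domain $\HH_2$, so $\Th$ extends provided the system has no genuine singular locus there. By Remark~\ref{rem:ZM} this last point amounts to verifying that dividing $Z_V^{(2)}$ by $\left(Z_M^{(2)}\right)^{c}$ cancels the \emph{conformal anomaly} exactly, so that $\Th$ (unlike $Z_M^{(2)}$ itself) carries no obstruction to continuation.

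For the modularity claim, assume in addition that $V$ is rational, so $V$ has only finitely many irreducible modules $W$ and the vector of characters $Z_W^{(1)}(\tau)$ is a vector valued modular form of weight $0$ for $\SL(2,\Z)$ by Zhu's theorem \cite{Z} and its $C_2$-cofinite extensions. Sewing as in \eqref{eq:Zdef} with traces taken over pairs of modules produces finitely many genus two partition functions $Z_{W_1,W_2}^{(2)}$, and by the first part the normalised functions $\Theta_{W_1,W_2}=\left(Z_M^{(2)}\right)^{-c}Z_{W_1,W_2}^{(2)}$ are holomorphic on $\HH_2$ and span a finite dimensional space $\mathcal V$. One then checks $\Sp(4,\Z)$-stability of $\mathcal V$ on a generating set: the two commuting copies of $\SL(2,\Z)$ acting on $\tau_1$ and $\tau_2$ act through the genus one vector valued property applied to each trace factor, while the residual ``mixing'' generator is controlled using the $\Sp(4,\Z)$-invariance of $\Psi(x,y)$ (Theorem~\ref{th:2P1mod}) together with the transformation law of $\cO_n$ under $\Sp(4,\Z)$ (Theorem~\ref{th:OnMod}): inserting the identity $G_n=\cO_n\,\Th$ of Theorem~\ref{theor:main} and comparing how the two sides transform (all coordinate dependence being carried by $\cO_n$) forces $\Th$ to transform with a single well-defined weight, which is pinned to $c/2$ by the normalising factor, $\left(Z_M^{(2)}\right)^{-c}$ carrying weight $c/2$ just as $\eta^{c}$ does at genus one.

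The hard part, and the reason the statement is still a conjecture, is precisely the passage from the sewing domain $\Fim$ to all of $\HH_2$, together with the control of the mixing modular generator that this entails. Every structural result of this paper lives a priori only on $\Fim$, whereas $\Sp(4,\Z)$ does not preserve $\Fim$; bridging the gap requires simultaneously showing that the holonomic system has no genuine singularities on $\HH_2\setminus\Fim$ and that the conformal anomaly of $Z_M^{(2)}$ cancels against that of $Z_V^{(2)}$ to exactly the power $c$ --- the latter being the physical expectation recorded after Remark~\ref{rem:ZM} and \cite{FS}, and the reason the \emph{normalised} partition function rather than $Z_V^{(2)}$ is the natural modular object. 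Establishing this rigorously, in effect a genus two analogue of Zhu's modular invariance theorem valid on the whole of $\HH_2$, is the decisive missing ingredient.
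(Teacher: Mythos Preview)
The statement you are addressing is labeled as a \emph{Conjecture} in the paper, not a theorem; the paper offers no proof whatsoever. After stating Conjecture~\ref{conj:ThV} the authors merely record the supporting example of a lattice VOA $V_L$, where $\Th=\Theta_L(\Omega)$ is the genus two Siegel theta function, and then move on. There is therefore nothing in the paper to compare your proposal against.

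What you have written is not a proof but a heuristic programme, and to your credit you say so explicitly in your final paragraph. Your outline correctly isolates the two genuine obstacles the paper itself flags: (i) passing from the sewing domain $\Fim$ to all of $\HH_2$, which is blocked by the conformal anomaly of $Z_M^{(2)}$ recorded in Remark~\ref{rem:ZM}; and (ii) the fact that $\Sp(4,\Z)$ does not preserve $\Fim$, so the modular transformation law cannot even be formulated until (i) is settled. Your suggestion that $C_2$-cofiniteness should produce a holonomic system whose coefficients extend to $\HH_2$ is reasonable and is indeed the direction pursued in the sequel~\cite{GT2} for the $(2,5)$ model, but the key steps you list --- that the system has no singular locus on $\HH_2\setminus\Fim$, that the anomaly cancels to exactly the power $c$, and that the mixing generator of $\Sp(4,\Z)$ acts within the span of the $\Theta_{W_1,W_2}$ --- are assertions, not arguments. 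In short: your diagnosis of what is missing matches the paper's own, and that is precisely why the statement is recorded as a conjecture rather than a theorem.
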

\noindent For example, for a lattice  VOA $V_L$ for an even lattice $L$ of rank $c$ we find $\Th=\Theta_{L}(\Omega)$, the genus two Siegel lattice theta function \cite{MT3}, a Siegel modular form of weight $c/2$ and level $|L^*/L|$ where $L^*$ is the  dual lattice. 
Thus, conjecturally,  it is the normalised partition function $\Th$ on which the full genus two $\Sp(4,\Z)$ modular group naturally acts.\footnote{We also note that $\Th$ has canonical properties in the one torus degeneration limite \cite{HT2}.}
One of the main purposes of this paper is to develop global differential operators $\cO_n(\mathbf{z})$ on which $\Sp(4,\Z)$ acts.
In the sequel \cite{GT2} we show how these operators give rise to a $\Sp(4,\Z)$ modular differential equation for the partition function for the $(2,5)$  minimal Virasoro model of central charge $-22/5$. 

\medskip

We define the linear differential operator $\cO_n(\mathbf{z})$ (which in general acts on differentiable  functions of $\Omega$) by
\begin{align}
\cO_n(\mathbf{z})\Theta_V:&=
Z_M^{(2)}(\tau_1,\tau_2,\epsilon)^{-c} {G}_n(\mathbf{z}).
\label{eq:On}
\end{align}

For $n=1$ we find $G_1(z_{1})=\nabla_{z_{1}} Z_{V}^{(2)}(\tau_1,\tau_2,\epsilon)$
so that, using \eqref{eq:DZM}, we find\footnote{
The operator \eqref{eq:On1} is a like a higher genus Serre derivative as discussed further in \cite{GT1}.}
\begin{align}
\mathcal{O}_1(z_{1})
=\nabla_{z_{1}}+ \frac{c}{12}s(z_{1}).
\label{eq:On1}
\end{align}

In order to describe the $n=2$ case, we also define the differential operator
\begin{align}
\mc{D}_{z_1,z_2}=\nabla_{z_1} 
+ dz_1^2 \left( \sups{2}\mathcal{P}_{1}(z_1,z_2) \partial_{z_2} + 2\,\sups{2}\mathcal{P}_{2}(z_1,z_2)\right).
\label{eq:Dop}
\end{align}
Then for $n=2$, the Ward identity \eqref{eq:Ward} implies
\begin{align*}
\mathcal{O}_2(z_1,z_2)\Theta_V=&\left(Z_M^{(2)}\right)^{-c} 
\mc{D}_{z_1,z_2}\nabla_{z_2} Z_{V}^{(2)} 
+ \frac{c}{2} \sups{2}\mathcal{P}_{4}(z_1,z_2) \,\Th\, dz_1^2dz_2^2\\
 =&\,\mc{D}_{z_1,z_2}\left(\nabla_{z_2}\Th+ \frac{c}{12}s(z_2)\Th\right)+\frac{c}{12}s(z_1)\nabla_{z_2}\Th
\notag
\\
&+ \frac{c^2}{144}s(z_1)s(z_2)\Th+ \frac{c}{2} \sups{2}\mathcal{P}_{4}(z_1,z_2) \, dz_1^2dz_2^2\,\Th.
\end{align*}
From \eqref{eq:nuDE} we note that
\begin{align}
\mc{D}_{z_1,z_2}\nu_a(z_2)\nu_b(z_2)
=\omega(z_1,z_2)\left(\nu_a(z_1)\nu_b(z_2)+\nu_a(z_2)\nu_b(z_1)\right).
\label{eq:Dnunu}
\end{align}
\eqref{eq:Dnunu} together with \eqref{eq:SDE} imply that 
\begin{align}
\cO_{2}(z_1,z_2)=&\sum_{1\le a\le b\le 2} \sum_{1\le c\le d\le 2} 
\nu_{a}(z_1)\nu_{b}(z_1)\nu_{c}(z_2)\nu_{d}(z_2) 
\frac{\partial^2}{\partial \Omega_{ab} \partial \Omega_{cd}}
\notag\\
&
+ \frac{c}{12}s(z_1)\nabla_{z_2}+ \frac{c}{12}s(z_2)\nabla_{z_1}+ \frac{c^2}{144}s(z_1)s(z_2)
\notag\\
&+ 2\, \omega(z_1,z_2)\sum_{1\le a\le b\le 2} \nu_{a}(z_1)\nu_{b}(z_2)
\frac{\partial}{\partial \Omega_{ab}} + \frac{c}{2}\omega(z_{1},z_{2})^2.
\label{eq:On2}
\end{align}
This expression is clearly symmetric in $z_1,z_2$ in accordance with 
Proposition~\ref{prop_Ggen}. Furthermore, each term in \eqref{eq:On2} is now written in coordinate independent way. 
\medskip 

Similarly to Section~3 of \cite{HT1} we now develop a graphical/combinatorial approach for computing  $\cO_n(\mathbf{z})$ and hence $G_n$ for all $n$. 
We define an \textit{order $n$ Virasoro graph} to be a directed  graph $\mc{G}^{n}$ with $n$ vertices labelled by $z_1,\ldots,z_n$. 
Each $z_{i}$-vertex has degree $\deg(z_i)=0,1$ or $2$.
The degree $1$ vertices can have either unit indegree or outdegree 
whereas the
degree $2$ vertices have both unit indegree and outdegree. 
Thus,  the connected subgraphs of $\mc{G}^{n}$ consist of $r$-cycles, with  $r\ge 1$ degree $2$    vertices,  and chains  with two degree 1 end-vertices with all vertices of  degree 2. 
We regard a single degree 0 vertex as being a degenerate chain. 
\begin{remark}\label{rem:permutation}
The set of non-isomorphic order $n$ Virasoro graphs is in one to one correspondence with the set of partial permutations of the label set $\lbrace 1,\ldots,n\rbrace$. This is described in further detail in \cite{HT1}.  
\end{remark}
We define a genus two weight $W(\mc{G}^{n})$ on $\mc{G}^{n}$ as follows. For each directed edge $\mc{E}_{ij}$  
we define an edge weight
\begin{align}
W(\mc{E}_{ij}) = 
W(
\xy(0,0)*{\cir<3pt>{}}="a"*+!R{z_i\,}; (10,0)*{\cir<3pt>{}}="b"*+!L{\,z_j}; \ar "b";"a";\endxy
)
=
\left\{
  \begin{array}{ll}
    \frac{1}{6}s(z_i) &\mbox{ for } i=j,\\
    \omega(z_{i},z_j) &\mbox{ for } i\neq j.
  \end{array}\right.
  \label{Wij}
\end{align}
Let ${\mathcal C}_{k\ell}$ denote a chain in $\mc{G}^n$ with end-vertices $z_k$ and $z_\ell$
\[
\xy
(-10,0)*[o]=<0.4pt>+{\cir<3pt>{}}="a"*+!D{z_m};
(10,0)*[o]=<0.4pt>+{\cir<3pt>{}}="d"*+!D{z_n};
(-20,0)*[o]=<0.4pt>+{\cir<3pt>{}}="b"*+!R{z_k\,};
(20,0)*[o]=<0.4pt>+{\cir<3pt>{}}="c"*+!L{\,z_\ell};
(0,0)*[o]=<6pt>+{\cdots}="e";
\ar "a";"b"; \ar "d";"c"
\endxy
\]
and assign a chain weight (including the degenerate chain) 
\begin{align}
W(\mc{C}_{k\ell}) 
= W(\xy
(-10,0)*[o]=<0.4pt>+{\cir<3pt>{}}="a";
(10,0)*[o]=<0.4pt>+{\cir<3pt>{}}="d";
(-20,0)*[o]=<0.4pt>+{\cir<3pt>{}}="b"*+!R{z_k\,};
(20,0)*[o]=<0.4pt>+{\cir<3pt>{}}="c"*+!L{\,z_\ell};
(0,0)*[o]=<6pt>+{\cdots}="e";
\ar "a";"b"; \ar "d";"c"
\endxy)
=A(z_k,z_\ell),
  \label{WCij}
\end{align}
where $A(z_k,z_\ell)=\sum_{1\le a\le b\le 2} \nu_{a}(z_k)\nu_{b}(z_\ell)\alpha_{ab}$ 
for free parameters $\alpha_{ab}=\alpha_{ba}$.
Let $K$ be the number of cycles and define a weight for $\mc{G}^n$ by
\begin{align}
W(\mc{G}^n) = \left(\frac{c}{2}\right)^K \prod_{\mathcal{E}_{ij}}W(\mc{E}_{ij})
\prod_{\mc{C}_{k\ell}}W(\mc{C}_{k\ell}),\label{g1_weightofgraph}
\end{align}
where the first product ranges over all the edges and the second product ranges over all the chains of $\mc{G}^n$.
Thus the weight depends on $c$, $\omega(z_i,z_j)$, $s(z_i)$, $\nu_a(z_i)$ and $\alpha_{ab}$. 
We also note that $W$ is multiplicative on the disconnected components of $\mc{G}^n$.

Lastly, define a linear map $\mathcal{L}_{\alpha}$ from $\mathbb{C}[\alpha_{ab}]$, the vector space of complex coefficient polynomials in $\alpha_{ab}$,  to the complex vector space spanned by $\frac{\partial}{\partial \Omega_{ab}}$ derivatives with
\begin{align}
   \mathcal{L}_{\alpha}\left( \alpha_{a_{1}b_{1}}\ldots \alpha_{a_{M}b_{M}}\right) &=
	\frac{\partial^M}{\partial \Omega_{a_{1}b_{1}}\ldots \partial \Omega_{a_{M}b_{M}}}.
   \label{eq:Lmap}
\end{align}
Let $p^n_{KM}$ be the number of inequivalent order $n$ Virasoro graphs containing $K$ cycles and $M$ chains. 
In \cite{HT1} the following  graph generating function is established
\begin{align}
p^{n}(\alpha,\beta)=\sum_{K\ge 0,M\ge 0}p^n_{KM}\alpha^M \beta^K=(-1)^n n! \sum_{i=0}^{n} \frac{(-\alpha)^i}{i!}\binom{-\beta-i}{n-i},
\label{pnKM}
\end{align}
for chain and cycle counting parameters $\alpha$ and $\beta$ respectively.
Thus for $n=1$ we find $p^1(\alpha,\beta) = \alpha+\beta$ corresponding to two inequivalent graphs  with weights
\begin{align*}
W\Big(\xy (0,0)*{\cir<3pt>{}}="a"*+!R{z_1\,};\endxy\Big)= A(z_1,z_1),
\qquad
W\Big(\xy (0,0)*[o]=<0.5pt>+{\cir<3pt>{}}="b"*+!R{z_1\,};
\ar@(ru,lu) "b";"b"
\endxy\Big)= \frac{c}{2}\frac{s(z_1)}{6},
\end{align*}
whose weight sum under the action of $\mc{L}_{\alpha}$ is $\cO_{1}(z_1)$ using \eqref{eq:On1}.

For $n=2$ we have $p^2(\alpha,\beta) = \alpha^2+2\alpha\beta+\beta^2+\beta+2\alpha$ for  7  graphs with weights:
\begin{align*}
&
W\Big(\xy
(0,0)*[o]=<0.4pt>+{\cir<3pt>{}}="a"*+!R{z_1\,}; (5,0)*[o]=<0.4pt>+{\cir<3pt>{}}="b"*+!L{\,z_2};
\endxy\Big)
=A(z_1,z_1)A(z_2,z_2),
\\
&
W\Big(
\xy
(0,0)*[o]=<0.4pt>+{\cir<3pt>{}}="a"*+!R{z_1\,}; (5,0)*[o]=<0.4pt>+{\cir<3pt>{}}="b"*+!L{\,z_2}; \ar@(ru,lu) "a";"a";\endxy\Big)=\frac{c}{2}\frac{s(z_1)}{6}A(z_2,z_2),
\qquad
W\Big(
\xy
(0,0)*[o]=<0.4pt>+{\cir<3pt>{}}="a"*+!R{z_1\,}; (5,0)*[o]=<0.4pt>+{\cir<3pt>{}}="b"*+!L{\,z_2}; \ar@(ru,lu) "b";"b";\endxy\Big)=\frac{c}{2}\frac{s(z_2)}{6}A(z_1,z_1),
\\
&
W\Big(
\xy
(0,0)*[o]=<0.4pt>+{\cir<3pt>{}}="a"*+!R{z_1\,}; (8,0)*[o]=<0.4pt>+{\cir<3pt>{}}="b"*+!L{\,z_2}; \ar@(ru,lu) "a";"a";\ar@(ru,lu) "b";"b";\endxy\Big)=\left(\frac{c}{2}\right)^2\frac{s(z_1)}{6}\frac{s(z_2)}{6},
\qquad
W\Big(
\xy
(0,0)*[o]=<0.4pt>+{\cir<3pt>{}}="a"*+!R{z_1\,}; (8,0)*[o]=<0.4pt>+{\cir<3pt>{}}="b"*+!L{\,z_2}; \ar@/^/ "b";"a";\ar@/^/ "a";"b";\endxy\Big)=\frac{c}{2}\omega(z_1,z_2)^2,
\\
&
W\Big(
\xy
(0,0)*[o]=<0.4pt>+{\cir<3pt>{}}="a"*+!R{z_1\,}; (8,0)*[o]=<0.4pt>+{\cir<3pt>{}}="b"*+!L{\,z_2}; \ar "a";"b";\endxy\Big)=
W\Big(
\xy
(0,0)*[o]=<0.4pt>+{\cir<3pt>{}}="a"*+!R{z_1\,}; (8,0)*[o]=<0.4pt>+{\cir<3pt>{}}="b"*+!L{\,z_2}; \ar "b";"a";\endxy\Big)=A(z_1,z_2),
\end{align*}
whose weight  sum under the action of $\mc{L}_{\alpha}$ 
using \eqref{eq:On2} is 
\begin{align*}
\sum_{\mc{G}^2}\mc{L}_{\alpha}\left(W(\mc{G}^2)\right)&=\cO_2(z_1,z_2).
\end{align*}
These examples illustrate the general result:
\begin{theorem}\label{theor:main}
The order $n$ genus two Virasoro generating function is given by \begin{align*}
G_n(\mathbf{z})=Z_M^{(2)}(\tau_1,\tau_2,\epsilon)^{c}\, \cO_n(\mathbf{z})\Theta_{V}(\tau_1,\tau_2,\epsilon),
\end{align*}
for linear differential operator 
\begin{align}
\cO_n(\mathbf{z}) = \sum_{\mc{G}^n}\mathcal{L}_{\alpha}(W(\mc{G}^n)),
\label{eq:main}
\end{align}
where the sum is taken over all inequivalent order $n$ Virasoro graphs $\mc{G}^n$.
\end{theorem}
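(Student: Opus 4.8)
The plan is to prove the identity $G_n(\mathbf z) = (Z_M^{(2)})^c\,\cO_n(\mathbf z)\Theta_V$ by induction on $n$, feeding the Ward identity of Proposition~\ref{prop:Vnpt} into the inductive hypothesis and then translating the resulting analytic expression into a sum over graphs. The base cases $n=1$ and $n=2$ are already verified in the text via \eqref{eq:On1} and \eqref{eq:On2}, so I would begin the inductive step by applying \eqref{eq:Ward} with the recursion vertex at $z_1$, rewriting $\sups{2}\mathcal P_1$ and $\sups{2}\mathcal P_2$ terms using $\mc D_{z_1,z_k}$ and the $(2,-1)$-bidifferential $\Psi$, and then substituting $G_{n-1} = (Z_M^{(2)})^c\,\cO_{n-1}(z_2,\ldots,z_n)\Theta_V$ and $G_{n-2} = (Z_M^{(2)})^c\,\cO_{n-2}(z_2,\ldots,\widehat z_k,\ldots,z_n)\Theta_V$. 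Dividing through by $(Z_M^{(2)})^c$ gives an expression for $\cO_n(\mathbf z)\Theta_V$ in terms of $\mc D_{z_1,z_k}$, $\nabla_{z_1}$, $\sups{2}\mathcal P_4$, the operators $\cO_{n-1},\cO_{n-2}$, and — crucially, via $\nabla_{z_1}\big((Z_M^{(2)})^c(\cdots)\big)$ — the factor $\tfrac{c}{12}s(z_1)$ coming from Proposition~\ref{prop:ZM}.

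The combinatorial heart of the argument is then to show that this recursion on operators matches the recursion satisfied by the graph sum $\sum_{\mc G^n}\mathcal L_\alpha(W(\mc G^n))$ when one classifies order-$n$ Virasoro graphs by the local structure at the vertex $z_1$. There are exactly the cases listed implicitly by Remark~\ref{rem:permutation}: (i) $z_1$ is an isolated degree-$0$ vertex (a degenerate chain), contributing $A(z_1,z_1)$ times a graph on $\{z_2,\ldots,z_n\}$, which under $\mathcal L_\alpha$ produces the $\nabla_{z_1}$ part of $\cO_{n-1}$; (ii) $z_1$ carries a self-loop, contributing $\tfrac{c}{2}\cdot\tfrac16 s(z_1)$ times an order-$(n-1)$ graph, matching the $\tfrac{c}{12}s(z_1)\cO_{n-1}$ term; (iii) $z_1$ is an interior vertex of a chain, or a chain endpoint joined by an edge $\omega(z_1,z_k)$ to some $z_k$ — here the key observation is that $\mc D_{z_1,z_k}$ acting on the $z_k$-dependence of a chain weight $A(\cdot,z_k)$ (resp.\ on $\nu_a(z_k)\nu_b(z_k)$ when $z_k$ is already an endpoint) produces exactly $\omega(z_1,z_k)$ times a longer chain, by \eqref{eq:nuDE} / \eqref{eq:Dnunu}; and (iv) $z_1$ and $z_k$ sit on a common edge of a cycle, where the $\sups{2}\mathcal P_4$ term together with \eqref{eq:SDE} reconstructs the $\tfrac{c}{2}\omega(z_1,z_k)^2$ edge-pair weight of a $2$-cycle or splices $z_1$ into an existing cycle. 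I would organize this as: first prove an operator identity $\cO_n(\mathbf z) = \mc D_{z_1,\cdot}\!$-twisted sum of lower $\cO$'s (this is purely formal manipulation of \eqref{eq:Ward}), then prove the matching combinatorial identity for the graph sums by the same vertex-at-$z_1$ case split, invoking \eqref{pnKM} or its derivation in \cite{HT1} to confirm the multiplicities agree.

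The main obstacle, I expect, is case (iii)/(iv): showing that the differential operator $\mc D_{z_1,z_k}$ (which contains $\nabla_{z_1}$ and hence differentiates \emph{all} the $\Omega$-dependence, including the $\nu_a$, $\omega$ and $s$ appearing in graph weights on the \emph{other} vertices) produces precisely the terms corresponding to attaching $z_1$ to the graph on $\{z_2,\ldots,z_n\}$, with no leftover terms. This is exactly where the analytic differential equations of Proposition~\ref{diffeqn} do the work: \eqref{eq:omDE}, \eqref{eq:nuDE}, \eqref{eq:SDE} are designed so that $\mc D_{z_1,y}$ applied to $\omega(\cdot)$, $\nu_a(\cdot)$, $s(\cdot)$ closes up on products of $\omega(z_1,\cdot)$'s — i.e.\ $\mc D_{z_1,z_k}$ acts on a weight $W(\mc G^{n-1})$ as a derivation that, edge by edge, either elongates a chain, fuses two chains into one, or closes a chain into a cycle (picking up the $c/2$), which is precisely the graph-theoretic operation ``insert $z_1$.'' One must check the Leibniz bookkeeping carefully — in particular that $\nabla_{z_1}$ hitting the implicit $\Omega$-dependence inside a chain weight $A(z_j,z_\ell)=\sum\nu_a(z_j)\nu_b(z_\ell)\alpha_{ab}$ is correctly reproduced, using \eqref{eq:nuDE_indep} and the definition \eqref{eq:Lmap} of $\mathcal L_\alpha$ to see that the extra $\nu$-factor lands on $z_1$ as the new chain end. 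Once this derivation-on-weights lemma is in place, the induction closes and the symmetry of $\cO_n(\mathbf z)$ follows from the symmetry of the graph sum together with Proposition~\ref{prop_Ggen}.
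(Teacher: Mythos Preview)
Your proposal is correct and follows essentially the same route as the paper: induction on $n$, deriving the operator recursion for $\cO_n$ from the Ward identity together with Proposition~\ref{prop:ZM}, then matching it to the graph sum by classifying order-$n$ graphs according to the local structure at the $z_1$-vertex and using the differential equations of Proposition~\ref{diffeqn} as the ``derivation-on-weights'' mechanism. One small clarification to your case (iv): splicing $z_1$ into an existing cycle of length $\ge 2$ is accomplished by $\mc D_{z_1,z_k}$ acting on an $\omega(z_a,z_k)\omega(z_k,z_b)$ factor via \eqref{eq:omDE} (the paper's case~(d)), whereas the $\sups{2}\mathcal P_4$ term combined with \eqref{eq:SDE} is used only to create a \emph{new} $2$-cycle from a $z_k$ self-loop in $\mc G^{n-1}$ (the paper's case~(b)).
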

\begin{proof}
We prove the result by induction in $n$. 
We have already shown the result holds for $n=1$ and $n=2$ and employ the Ward identity \eqref{eq:Ward} to inductively prove \eqref{eq:main} for $n\ge 2$.

Every inequivalent order $n$ Virasoro graph $\mathcal{G}^{n}$ can be characterized,  according to the nature of the $z_1$ vertex, in terms of following five types:
\begin{enumerate}[(i)]
\item $\deg(z_{1})=0$: $\xy(0,0)*[o]=<0.4pt>+{\cir<3pt>{}}="a"*+!R{z_1\,};\endxy
\ \cdots$
\item $\deg(z_{1})=1$:
$\xy(0,0)*[o]=<0.4pt>+{\cir<3pt>{}}="a"*+!R{z_1\,};(10,0)*[o]=<0.4pt>+{\cir<3pt>{}}="c"*+!L{\,z_a};\ar "c";"a";\endxy 
\cdots$ or 
$\xy(0,0)*[o]=<0.4pt>+{\cir<3pt>{}}="a"*+!R{z_1\,};(10,0)*[o]=<0.4pt>+{\cir<3pt>{}}="c"*+!L{\,z_a};\ar "a";"c";\endxy \cdots$
\item $\deg(z_{1})=2$ where the $z_{1}$-vertex forms a 1-cycle:
$
\xy(0,0)*[o]=<0.4pt>+{\cir<3pt>{}}="a"*+!R{z_1\,};\ar@(dr,ur) "a";"a";\endxy
\cdots
$
\item $\deg(z_{1})=2$ where the $z_{1}$-vertex is an element of a  $2$-cycle: 
$
\xy
(0,0)*[o]=<0.4pt>+{\cir<3pt>{}}="a"*+!R{z_1}; (10,0)*[o]=<0.4pt>+{\cir<3pt>{}}="b"*+!L{z_k}; \ar@/^/ "b";"a";\ar@/^/ "a";"b";\endxy
\cdots
$
\item $\deg(z_{1})=2$ where either the $z_{1}$-vertex is a non end-vertex of a chain or an element of an  $r$-cycle with $r\ge 3$: 
$
\xy
(0,0)*[o]=<0.4pt>+{\cir<3pt>{}}="a"*+!D{z_1};
(-10,0)*[o]=<0.4pt>+{\cir<3pt>{}}="b"*+!R{z_a\,};
(10,0)*[o]=<0.4pt>+{\cir<3pt>{}}="c"*+!L{\,z_b};
(-20,0)*[o]=<6pt>+{\cdots}="d";
(20,0)*[o]=<6pt>+{\cdots}="e";
\ar "a";"b"; \ar "a";"c"
\endxy
$
\end{enumerate}
The Ward identity \eqref{eq:Ward} and \eqref{eq:DZM} imply we may recursively describe $\cO_n(\mathbf{z})$ as follows:
\begin{align}
\cO_n(\mathbf{z})=&\frac{c}{12} s(z_1){\cO}_{n-1}(z_2,\ldots,z_n)
\notag
 \\
&+\Bigg( \nabla_{z_1} 
+ dz_1^2\sum_{k=2}^n \left( \sups{2}\mathcal{P}_{1}(z_1,z_k) \partial_{z_k} + 2\cdot\sups{2}\mathcal{P}_{2}(z_1,z_k) \right)\Bigg){\cO}_{n-1}(z_2,\ldots,z_n) 
\notag
\\
&
+\frac{c}{2}\, \sum_{k=2}^n \sups{2}\mathcal{P}_{4}(z_1,z_k) \, dz_1^2dz_k^2
\,{\cO}_{n-2}(z_2,\ldots,\widehat{z}_k,\ldots, z_n),
\label{eq:WardOn}
\end{align}
We now show how the parts of \eqref{eq:WardOn} relate to  Virasoro graph weights by using induction in $n$. Thus given $\cO_{n-1}$ and $\cO_{n-2}$ satisfy \eqref{eq:main}, we  see that the $\frac{c}{12} s(z_1){\cO}_{n-1} $ term  of \eqref{eq:WardOn} arises from the sum over all $\mathcal{G}^{n}$ graphs of type~(iii). 

Let $\mathcal{G}^{n-1}$ denote an order $n-1$ Virasoro graph labelled by $z_{2},\ldots, z_{n}$
of weight $W(\mc{G}^{n-1})$. This gives a contribution to \eqref{eq:WardOn} of 
\begin{align}
\label{eq:Leib}
\nabla_{z_1}\mc{L}_{\alpha}\left(W(\mc{G}^{n-1})\right)
 &=
 \mc{L}_{\alpha}\left(W(\mc{G}^{n-1})A(z_1,z_1)\right)
+  \mc{L}_{\alpha}\left(\nabla_{z_1}W(\mc{G}^{n-1})\right),
\end{align} 
using the Leibniz rule for $\nabla_x$. 
In particular, all terms of the form $W(\mc{G}^{n-1})A(z_1,z_1)$ arise as weights of  $\mathcal{G}^{n}$ graphs of type~(i). 

Let us examine the contributions that arise from 
$\nabla_{z_1}W(\mc{G}^{n-1})$ in \eqref{eq:Leib} and the remaining  terms
in \eqref{eq:WardOn} and show that these can be expressed in terms of a sum of the weights of graphs of type~(ii), (iv) and (v).
Let $z_k$ be a given vertex in $\mc{G}^{n-1}$ for $k=2,\ldots ,n$. Then, much as before, we can characterize $\mc{G}^{n-1}$ according to (a) $z_k$ is a degree 0 vertex  (b)  $z_k$ is a disconnected vertex of degree 2 (c) $z_k$ is a degree 1 vertex   or (d)  $z_k$ is a degree 2 vertex in a chain or an $r$-cycle for $r\ge 2$. 

\textbf{Case (a).} $\mc{G}^{n-1}$ consists of a  $z_k$ vertex of degree $0$ and an order $n-2$ Virasoro graph $\mc{G}^{n-2}$ (with vertices $z_2,\ldots,\widehat{z}_k,\ldots,z_n$) 
of weight
\begin{align*}
W(\mc{G}^{n-1})=A(z_k,z_k)W(\mc{G}^{n-2}).
\end{align*}
Using \eqref{eq:Dnunu} this  contributes to \eqref{eq:WardOn} the term 
\begin{align*}
\mc{D}_{z_1,z_k}A(z_k,z_k)W(\mc{G}^{n-2})=
2A(z_1,z_k)\omega(z_1,z_k)W(\mc{G}^{n-2}),
\end{align*}
the sum of the weights of two $\mathcal{G}^{n}$ graphs  of type~(ii) where $z_1$ and $z_k$ form a disconnected chain of length $2$.

\textbf{Case (b).} $\mc{G}^{n-1}$ consists of a disconnected degree 2 vertex $z_k$  and an order $n-2$ Virasoro graph $\mc{G}^{n-2}$ of weight
$W(\mc{G}^{n-1})= \frac{c}{12}s(z_k)W(\mc{G}^{n-2})$
which contributes $\frac{c}{12}\mc{D}_{z_1,z_k}(s(z_k))W(\mc{G}^{n-2})$  to \eqref{eq:WardOn}. Summing with the  
$\frac{c}{2} \sups{2}\mathcal{P}_{4}(z_1,z_k)W(\mc{G}^{n-2})$ contribution to  \eqref{eq:WardOn} gives
\begin{align*}
\frac{c}{2}\omega(z_1,z_k)^2W(\mc{G}^{n-2}),
\end{align*}
using \eqref{eq:SDE}, the weight of a $\mathcal{G}^{n}$ graph of type~(iv) where $z_1$ and $z_k$ form a 2-cycle.

\textbf{Case (c).} $z_k$ is an end-vertex  of a chain $\mc{C}_{k\ell}$ so that 
$W(\mc{G}^{n-1})=A(z_k,z_\ell)\omega(z_k,z_m)\ldots$, where $z_k$ is joined to $z_m$ and the ellipsis denotes the factors independent of $z_k$. Using \eqref{eq:omDE}  and \eqref{eq:nuDE} this  contributes terms to \eqref{eq:WardOn} of the form
\begin{align}
&\Bigg(\Big( \nabla_{z_1} 
+ dz_1^2  \Big( \sups{2}\mathcal{P}_{1}(z_1,z_k) \partial_{z_k} + 2\cdot\sups{2}\mathcal{P}_{2}(z_1,z_k) 
\notag
\\
&
+\sups{2}\mathcal{P}_{1}(z_1,z_m) \partial_{z_m} + \sups{2}\mathcal{P}_{2}(z_1,z_m)\Big)\Big)A(z_k,z_\ell)\omega(z_k,z_m)\Bigg)\ldots
\notag
\\
& =A(z_1,z_\ell)\omega(z_1,z_k)\omega(z_k,z_m)\ldots
+A(z_k,z_\ell)\omega(z_k,z_1)\omega(z_1,z_m)\ldots.
\label{eq:chain}
\end{align} 
Note that we have omitted in \eqref{eq:chain} contributions  to \eqref{eq:WardOn} of the form:
\begin{align*}
A(z_k,z_\ell)\omega(z_k,z_m)\left(\nabla_{z_1} +\sups{2}\mathcal{P}_{1}(z_1,z_m) \partial_{z_m} +
\sups{2}\mathcal{P}_{2}(z_1,z_m)\right)(\ldots)
\end{align*}
which contribute to case~(d) for $z_m$.
The first term in \eqref{eq:chain} is the weight of a $\mathcal{G}^{n}$ graph of type~(ii): 
\[
\xy
(-10,0)*[o]=<0.4pt>+{\cir<3pt>{}}="a"*+!R{z_1\,};
(0,0)*[o]=<0.4pt>+{\cir<3pt>{}}="b"*+!D{z_k};
(10,0)*[o]=<0.4pt>+{\cir<3pt>{}}="f"*+!D{z_m};
(35,0)*[o]=<0.4pt>+{\cir<3pt>{}}="c"*+!L{\,z_\ell};
(25,0)*[o]=+{}="d";
(20,0)*[o]=<6pt>+{\cdots} ="e";
\ar "b";"a"; \ar "f";"b";\ar "d";"c"
\endxy
\quad \ldots
\]
 and  the second term is the weight of a graph of type~(v):
\[
\xy
(-10,0)*[o]=<0.4pt>+{\cir<3pt>{}}="a"*+!R{z_k\,};
(0,0)*[o]=<0.4pt>+{\cir<3pt>{}}="b"*+!D{z_1};
(10,0)*[o]=<0.4pt>+{\cir<3pt>{}}="f"*+!D{z_m};
(35,0)*[o]=<0.4pt>+{\cir<3pt>{}}="c"*+!L{\,z_\ell};
(25,0)*[o]=+{}="d";
(20,0)*[o]=<6pt>+{\cdots} ="e";
\ar "b";"a"; \ar "f";"b";\ar "d";"c"
\endxy
\quad\ldots
\]

\textbf{Case (d).} If $\deg(z_k)=2 $ then
$W(\mc{G}^{n-1})=\omega(z_a,z_k)\omega(z_k,z_b)\ldots$, where $z_k$ is joined to $z_a$ and $z_b$ and the ellipsis denotes the factors independent of $z_k$. This  contributes terms to \eqref{eq:WardOn} of the form
\begin{align}
&\Bigg(\Big( \nabla_{z_1} 
+ dz_1^2  \Big(  \sups{2}\mathcal{P}_{1}(z_1,z_k) \partial_{z_k} +\sups{2}\mathcal{P}_{1}(z_1,z_a) \partial_{z_a} +\sups{2}\mathcal{P}_{1}(z_1,z_b) \partial_{z_b} 
\notag
\\
&
+\sups{2}\mathcal{P}_{2}(z_1,z_a)+ 2\cdot\sups{2}\mathcal{P}_{2}(z_1,z_k)  
+  \sups{2}\mathcal{P}_{2}(z_1,z_b)\Big)\Big)\omega(z_a,z_k)\omega(z_k,z_b)\Bigg)\ldots
\notag
\\
& =\omega(z_a,z_1)\omega(z_1,z_k)\omega(z_k,z_b)\ldots
+\omega(z_a,z_k)\omega(z_k,z_1)\omega(z_1,z_b)\ldots
\label{eq:zazb}
\end{align} 
using \eqref{eq:omDE}.
Note that we have omitted in \eqref{eq:omDE} contributions  to \eqref{eq:WardOn} of the form:
\begin{align*}
\omega(z_a,z_k)\omega(z_k,z_a)& \Big(
\nabla_x+\sups{2}\mathcal{P}_{1}(z_1,z_a) \partial_{z_a} +\sups{2}\mathcal{P}_{1}(z_1,z_b) \partial_{z_b}  \\
&
+\sups{2}\mathcal{P}_{2}(z_1,z_a)+  \sups{2}\mathcal{P}_{2}(z_1,z_b)\Big)(\ldots)
\end{align*}
 which contribute to case~(c) and case~(d) for  $z_a$ or $z_b$. 
The two terms in \eqref{eq:zazb} are weights of a $\mathcal{G}^{n}$ graphs of type~(v): 
\[
\cdots
\xy
(-10,0)*[o]=<0.4pt>+{\cir<3pt>{}}="a"*+!R{z_a\,};
(0,0)*[o]=<0.4pt>+{\cir<3pt>{}}="b"*+!D{z_1};
(10,0)*[o]=<0.4pt>+{\cir<3pt>{}}="f"*+!D{z_k};
(20,0)*[o]=<0.4pt>+{\cir<3pt>{}}="c"*+!L{\,z_b\,\cdots};
\ar "b";"a"; \ar "f";"b";\ar "f";"c"
\endxy,
\qquad 
\cdots
\xy
(-10,0)*[o]=<0.4pt>+{\cir<3pt>{}}="a"*+!R{z_a\,};
(0,0)*[o]=<0.4pt>+{\cir<3pt>{}}="b"*+!D{z_k};
(10,0)*[o]=<0.4pt>+{\cir<3pt>{}}="f"*+!D{z_1};
(20,0)*[o]=<0.4pt>+{\cir<3pt>{}}="c"*+!L{\,z_b\,\cdots};
\ar "b";"a"; \ar "f";"b";\ar "f";"c"
\endxy
\]
Thus, altogether, we find that the weights of all  $\mc{G}^n$ graphs of type~(i)-(v) contribute and hence \eqref{eq:main} holds.
\end{proof}

\begin{remark}
\label{rem:Onrem}
$\cO_n(\mathbf{z})$ of \eqref{eq:main} is symmetric in $z_1,\ldots,z_n$ and is expressed in a coordinate free way in terms of $\omega(z_i,z_j)$, $\nu_a(z_i)$, $s(z_i)$ and $\frac{\partial}{\partial \Omega_{ab}}$ for all $\Omega\in\HH_2$  where the \textbf{only} dependence on the original VOA is the central charge  $c$. 
Furthermore,  using Corollaries~\ref{diffeqn_indep} and \ref{Seqn_indep} it follows that $\cO_n(\mathbf{z})$  satisfies the recurrence relation:
\begin{align}
\cO_n(\mathbf{z})=&\left( \nabla_{z_1}+ \frac{c}{12} s(z_1)
+ \sum_{k=2}^n dz_k\left( \Psi(z_1,z_k) \partial_{z_k} + 2\partial_{z_k}\Psi(z_1,z_k) \right)\right){\cO}_{n-1}(z_2,\ldots,z_n) 
\notag
\\
&
+3c\, \sum_{k=2}^n dz_k^3\left(\partial_{z_k}^3\Psi(z_1,z_k) \right)  
\,{\cO}_{n-2}(z_2,\ldots,\widehat{z}_k,\ldots, z_n),
\label{eq:Ongen}
\end{align}
for any choice of coordinates $\mathbf{z}$ and for all $\Omega\in\HH_2$.
\end{remark}
\begin{remark}\label{rem:modules}
Theorem~\ref{theor:main} can be readily generalized for any pair of ordinary $V$-modules $W_1,W_2$ with genus two $n$-point function
\begin{align*}
Z_{W_1,W_2}^{(2)}\left(\omt,z_1;\ldots;\omt,z_n\right)d\mathbf{z}^2&=d\mathbf{z}^2
\sum_{u\in V} Z^{(1)}_{W_1}(Y[\omt,z_1]\ldots Y[\omt,z_n]u;\tau_1) Z^{(1)}_{W_2}(\overline{u};\tau_2)\\
&=\cO_n(\mathbf{z})\Theta_{W_1,W_2}(\tau_1,\tau_2,\epsilon),
\end{align*}
where $
\Theta_{W_1,W_2}(\tau_1,\tau_2,\epsilon)=
Z_M^{(2)}(\tau_1,\tau_2,\epsilon)^{-c}Z_{W_1,W_2}^{(2)}(\tau_1,\tau_2,\epsilon)$. 
Following Conjecture~\ref{conj:ThV} we further conjecture that if $V$ is rational and $C_2$ cofinite, then $\Theta_{W_1,W_2}$ for irreducible $W_1,W_2$ form further components of a weight $c/2$ vector valued Siegel modular form.  
\end{remark}

\section{Analytic and modular transformations}\label{sect:an_mod}
By remark~\ref{rem:Onrem}, we may express $\cO_n(\mathbf{z})$ in any coordinate system on an arbitrary genus two Riemann surface.
In particular,  we can  consider the behaviour of $\cO_n(\mathbf{z})$ under a general analytic transformation:
\begin{proposition}
\label{prop:GenTransf}
Let $z\rightarrow\phi(z)$ be an analytic map then we have
\begin{align*}
\cO_n(\mathbf{z})=\cO_n(z_1,\ldots,\phi(z_i),\ldots,z_n)+\frac{c}{12}\{\phi(z_i),z_i\}dz_{i}^2\, \cO_{n-1}(z_1,\ldots,\widehat{z}_i,\ldots,z_n),
\end{align*}
for $i\in\{1,\ldots,n\}$ and $\{\phi(z),z\}$ is the Schwarzian derivative.
\end{proposition}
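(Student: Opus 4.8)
The plan is to use the graphical characterization of $\cO_n(\mathbf{z})$ from Theorem~\ref{theor:main} together with the transformation laws of the building blocks $\omega(z_i,z_j)$, $\nu_a(z_i)$ and $s(z_i)$ under $z_i\rightarrow\phi(z_i)$. First I recall that $\omega(x,y)$ and the $\nu_a(x)$ are genuine bidifferentials/1-differentials, hence transform \emph{tensorially} in each argument: replacing $z_i$ by $\phi(z_i)$ and expressing everything back in the $z_i$-coordinate reproduces $\omega(\phi(z_i),\cdot)$ times the appropriate Jacobian factors which are already absorbed by reading the weight as a differential form. The only building block that transforms \emph{inhomogeneously} is the projective connection: by \eqref{eq:sphi}, $s(z_i)=s(\phi(z_i))+\{\phi(z_i),z_i\}dz_i^2$. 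Since $\frac{\partial}{\partial\Omega_{ab}}$ does not act on the coordinate $z_i$ at all, the modular derivative part is unaffected. So the entire discrepancy between $\cO_n(\mathbf{z})$ and $\cO_n(z_1,\ldots,\phi(z_i),\ldots,z_n)$ comes from the $\frac{1}{6}s(z_i)$ appearing in edge weights, i.e.\ from $1$-cycles at the $z_i$-vertex.

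Next I would organize the graph sum by the role of the $z_i$-vertex, exactly as in the proof of Theorem~\ref{theor:main}: either (a) $z_i$ has degree $0$ or $1$ or lies in an $r$-cycle with $r\geq 2$ or is an internal chain vertex — in all these cases the weight $W(\mc{G}^n)$ involves only $\omega$, $\nu_a$, $\alpha_{ab}$ at the $z_i$-vertex, which are insensitive (as differentials) to the coordinate choice, so these graphs contribute identically to $\cO_n(z_1,\ldots,\phi(z_i),\ldots,z_n)$; or (b) $z_i$ forms a $1$-cycle, contributing a factor $\frac{c}{2}\cdot\frac{1}{6}s(z_i)=\frac{c}{12}s(z_i)$ times the weight of an order $n-1$ Virasoro graph on the remaining vertices $z_1,\ldots,\widehat{z}_i,\ldots,z_n$. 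Summing the type-(b) graphs, using Theorem~\ref{theor:main} for $n-1$, gives precisely $\frac{c}{12}s(z_i)\,\cO_{n-1}(z_1,\ldots,\widehat{z}_i,\ldots,z_n)$ after applying $\mc{L}_\alpha$ (note $\mc{L}_\alpha$ acts only on the $\alpha_{ab}$'s, which are untouched). Substituting $s(z_i)=s(\phi(z_i))+\{\phi(z_i),z_i\}dz_i^2$ into the type-(b) sum splits it into the type-(b) contribution to $\cO_n(z_1,\ldots,\phi(z_i),\ldots,z_n)$ plus the extra term $\frac{c}{12}\{\phi(z_i),z_i\}dz_i^2\,\cO_{n-1}(z_1,\ldots,\widehat{z}_i,\ldots,z_n)$, which is exactly the claim.

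Alternatively — and perhaps more cleanly — one can argue directly from the recursion \eqref{eq:Ongen} in Remark~\ref{rem:Onrem} by induction on $n$. Take $i=1$ without loss of generality (symmetry of $\cO_n$); the base case $n=1$ is $\cO_1(z_1)=\nabla_{z_1}+\frac{c}{12}s(z_1)$ and \eqref{eq:sphi} immediately gives the result with $\cO_0\equiv\mathrm{Id}$. For the inductive step one uses that $\Psi(z_1,z_k)$, being the $(2,-1)$-bidifferential $\sups{2}\mathcal{P}_1(z_1,z_k)\,dz_1^2(dz_k)^{-1}$ (Proposition~\ref{prop:P21}), transforms tensorially, so the operator $\nabla_{z_1}+\sum_{k}dz_k(\Psi(z_1,z_k)\partial_{z_k}+2\partial_{z_k}\Psi(z_1,z_k))+3c\sum_k dz_k^3\partial_{z_k}^3\Psi(z_1,z_k)$ is coordinate independent, while only the $\frac{c}{12}s(z_1)$ term picks up $\frac{c}{12}\{\phi(z_1),z_1\}dz_1^2$ acting on $\cO_{n-1}(z_2,\ldots,z_n)$; for $i\neq 1$ one applies the inductive hypothesis to $\cO_{n-1}(z_2,\ldots,z_n)$ and $\cO_{n-2}$ and checks the two extra Schwarzian terms assemble correctly via \eqref{eq:Ongen}. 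The main obstacle is purely bookkeeping: one must be careful that the Schwarzian term $\{\phi(z_i),z_i\}dz_i^2$ is a $2$-differential in $z_i$ of the correct weight to pair against $\cO_{n-1}$, and — in the graphical approach — that the correspondence "remove the $1$-cycle at $z_i$" is a genuine bijection between type-(b) order-$n$ graphs and order-$(n-1)$ graphs on the remaining labels, which is immediate from Remark~\ref{rem:permutation}. No analytic subtlety arises since all objects are holomorphic on $\HH_2$ by Remark~\ref{rem:Onrem}.
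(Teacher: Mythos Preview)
Your first argument is correct and is essentially the paper's own proof: reduce to $i=1$ by symmetry, observe that $\omega(z_1,z_j)$ and $\nu_a(z_1)$ are genuine differentials while $s(z_1)$ picks up the Schwarzian via \eqref{eq:sphi}, and note that $s(z_1)$ occurs precisely in the type~(iii) graphs (a $1$-cycle at $z_1$) whose removal yields all order $n-1$ graphs on the remaining labels. Your alternative inductive argument via \eqref{eq:Ongen} is also valid but is additional to what the paper does.
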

\begin{proof}
 Choose $i=1$ wlog by Proposition~\ref{prop_Ggen}.
$\omega(z_1,z_j), \nu_a(z_1)$  are invariant  under an analytic transformation whereas $s(z_1)$ 
 transforms as in \eqref{eq:sphi}.
Such a $s(z_1)$ term only arises in the Virasoro graphs $\mathcal{G}^{n}$ of type~(iii), 
where the $z_{1}$-vertex forms a 1-cycle  of weight $W(\mc{G}^{n})=\frac{c}{12}s(z_1)W(\mc{G}^{n-1})$. Thus the result follows. 
\end{proof}

\medskip
In order to describe the genus two modular properties of $\cO_n(\mathbf{z})$ we 
analyse the modular properties of the $(2,-1)$-form $\Psi(x,y) $ of \eqref{eq:P21form}. 
The genus two modular group $\Sp(4,\Z)$ consists of integral block matrices $\gamma:=\left[
\begin{smallmatrix}
A & B\\
C & D 
\end{smallmatrix}
\right]$ where  $A,B,C,D$ obey:
\begin{align}
&A^TD-C^TB=I,\quad\, AB^T=BA^T,\quad\,  CD^T=DC^T ,
\notag
\\
&A^TC=C^TA, \quad\, B^TD=D^T B,
\label{eq:sp4rels}
\end{align} 
for identity matrix $I$. 
It is convenient to define for $\gamma\in\Sp(4,\Z)$ and $\Omega\in \HH_2$ 
\begin{align*}
M=C\Omega+D,\quad N=(C\Omega+D)^{-1}.
\end{align*}
The holomorphic differentials 
$\boldsymbol{\nu}(x)$, the period matrix $\Omega$, the derivative operator $\nabla_x$, the bidifferential $\omega(x,y)$ and the projective connection $s(x)$ transform under $\gamma\in\Sp(4,\Z)$ as follows \cite{F,Mu1, GT1}
\begin{align}
&\boldsymbol{\nu}^{\gamma}(x)=\boldsymbol{\nu}N,\qquad
\Omega^{\gamma} =(A\Omega +B)N,\qquad
\nabla_{x}^{\gamma} =\nabla_{x},
\label{eq:mod}
\\
&\omega^{\gamma}(x,y) =\omega(x,y)
-\frac{1}{2}\sum_{1\le a\le b\le 2}\left(\nu_a(x)\nu_b(y)+\nu_b(x)\nu_a(y)\right)
\frac{\partial}{\partial \Omega_{ab}}\log\det M,\label{eq:ommod}
\\
&s^{\gamma}(x ) =s(x )
-6\,\nabla_x\log\det M. \label{eq:Smod}
\end{align} 
We now show that for the the $(2,-1)$ bidifferential $\Psi(x,y) $ of \eqref{eq:P21form} 
\begin{theorem}
\label{th:2P1mod}
$\Psi(x,y) $  is $\Sp(4,\Z)$ modular  invariant.
\end{theorem}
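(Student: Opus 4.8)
The plan is to verify directly that the expression \eqref{eq:P21form} for $\Psi(x,y)$ is unchanged when every ingredient is replaced by its $\gamma$-transform using \eqref{eq:mod} and \eqref{eq:ommod}. Writing $\Psi(x,y)=\sups{2}\mathcal{P}_1(x,y)\,dx^2(dy)^{-1}$, the building blocks are the bidifferential $\omega(x,y)$, the row vector $\boldsymbol{\nu}(x)$ (entering through $2\times2$ determinants of the form $\left|\begin{smallmatrix}\boldsymbol{\nu}(x)\\\boldsymbol{\nu}(y)\end{smallmatrix}\right|$, $\left|\begin{smallmatrix}\boldsymbol{\nu}(y)\\\partial_y\boldsymbol{\nu}(y)\end{smallmatrix}\right|$, $\left|\begin{smallmatrix}\boldsymbol{\nu}(y)\\\nabla_x\boldsymbol{\nu}(y)\end{smallmatrix}\right|$) and the operator $\nabla_x$. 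First I would record the two easy facts: $\nabla_x$ is invariant by \eqref{eq:mod}, and under $\boldsymbol{\nu}\mapsto\boldsymbol{\nu}N$ any $2\times2$ determinant $\left|\begin{smallmatrix}\boldsymbol{\nu}(u)\\ \boldsymbol{\nu}(v)\end{smallmatrix}\right|$ — or more generally $\left|\begin{smallmatrix}\boldsymbol{\nu}(u)\\ D\boldsymbol{\nu}(v)\end{smallmatrix}\right|$ for any scalar differential operator $D$ acting on the second row — picks up exactly the factor $\det N=(\det M)^{-1}$, since the determinant is multilinear and $N$ is a constant matrix (independent of $x,y$, though depending on $\Omega$). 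Thus the ratio $\left|\begin{smallmatrix}\boldsymbol{\nu}(x)\\\boldsymbol{\nu}(y)\end{smallmatrix}\right|/\left(\left|\begin{smallmatrix}\boldsymbol{\nu}(y)\\\partial_y\boldsymbol{\nu}(y)\end{smallmatrix}\right|\,dy\right)$ is already modular invariant, and the whole problem reduces to showing that the numerator
\[
N(x,y):=\omega(x,y)\begin{vmatrix}\boldsymbol{\nu}(x)\\\boldsymbol{\nu}(y)\end{vmatrix}+\begin{vmatrix}\boldsymbol{\nu}(y)\\\nabla_x\boldsymbol{\nu}(y)\end{vmatrix}
\]
transforms with the single factor $\det N$.

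The substantive step is to track the correction term in \eqref{eq:ommod}. Under $\gamma$ the first summand of $N(x,y)$ becomes
\[
\omega(x,y)\,\det N\begin{vmatrix}\boldsymbol{\nu}(x)\\\boldsymbol{\nu}(y)\end{vmatrix}-\frac{\det N}{2}\sum_{a\le b}\bigl(\nu_a(x)\nu_b(y)+\nu_b(x)\nu_a(y)\bigr)\bigl(\partial_{\Omega_{ab}}\log\det M\bigr)\begin{vmatrix}\boldsymbol{\nu}(x)\\\boldsymbol{\nu}(y)\end{vmatrix},
\]
while the second summand $\left|\begin{smallmatrix}\boldsymbol{\nu}(y)\\\nabla_x\boldsymbol{\nu}(y)\end{smallmatrix}\right|$ must be recomputed from $\boldsymbol{\nu}^\gamma(y)=\boldsymbol{\nu}(y)N$ and $\nabla_x^\gamma=\nabla_x$: the extra contribution comes from $\nabla_x$ hitting the constant matrix $N=(C\Omega+D)^{-1}$, producing a term involving $\nabla_x N$. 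So the key identity I would aim to prove is that
\[
\begin{vmatrix}\boldsymbol{\nu}(y)\\ \nabla_x(\boldsymbol{\nu}(y)N)\end{vmatrix}\ \text{minus}\ \det N\begin{vmatrix}\boldsymbol{\nu}(y)\\\nabla_x\boldsymbol{\nu}(y)\end{vmatrix}
\]
exactly cancels the $\det N \cdot(-\tfrac12\sum\cdots)\left|\begin{smallmatrix}\boldsymbol{\nu}(x)\\\boldsymbol{\nu}(y)\end{smallmatrix}\right|$ correction from $\omega^\gamma$. For this I would expand $\nabla_x(\boldsymbol{\nu}(y)N)=(\nabla_x\boldsymbol{\nu}(y))N+\boldsymbol{\nu}(y)(\nabla_x N)$; the first piece contributes $\det N\left|\begin{smallmatrix}\boldsymbol{\nu}(y)\\\nabla_x\boldsymbol{\nu}(y)\end{smallmatrix}\right|$ and drops out, leaving the determinant $\left|\begin{smallmatrix}\boldsymbol{\nu}(y)\\\boldsymbol{\nu}(y)(\nabla_x N)\end{smallmatrix}\right|$. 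Using $N=(C\Omega+D)^{-1}$ one has $\nabla_x N=-N(\nabla_x(C\Omega+D))N=-N C(\nabla_x\Omega)N$, and $\nabla_x\Omega_{ab}=\nu_a(x)\nu_b(x)$ by the very definition \eqref{eq:nabla}; also $\partial_{\Omega_{ab}}\log\det M=\operatorname{tr}(N C\,\partial_{\Omega_{ab}}\Omega)$, with $\partial_{\Omega_{ab}}\Omega$ the symmetric elementary matrix. Substituting these and using multilinearity/antisymmetry of the $2\times2$ determinant (so that $\left|\begin{smallmatrix}\boldsymbol{\nu}(y)\\\boldsymbol{\nu}(y)P\end{smallmatrix}\right|$ only sees the antisymmetric part of $P$, equivalently $\det N$ times $\left|\begin{smallmatrix}\boldsymbol{\nu}(y)\\\boldsymbol{\nu}(y)\cdot(\text{stuff})\end{smallmatrix}\right|$) I expect the two corrections to match identically, invoking the symmetry relations \eqref{eq:sp4rels} (in particular $C\Omega+D$ symmetric combinations) where needed.

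The main obstacle will be the bookkeeping of this cancellation: correctly expressing $\partial_{\Omega_{ab}}\log\det M$ and $\nabla_x N$ in compatible index notation and showing that the sum $\sum_{a\le b}(\nu_a(x)\nu_b(y)+\nu_b(x)\nu_a(y))\partial_{\Omega_{ab}}\log\det M\cdot\left|\begin{smallmatrix}\boldsymbol{\nu}(x)\\\boldsymbol{\nu}(y)\end{smallmatrix}\right|$ equals (twice, up to sign) the determinant $\left|\begin{smallmatrix}\boldsymbol{\nu}(y)\\\boldsymbol{\nu}(y)NC(\nabla_x\Omega)N\end{smallmatrix}\right|$ after clearing the $\det N$. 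A clean way to handle this is to contract everything against the antisymmetric symbol $\varepsilon_{ab}$: note $\left|\begin{smallmatrix}\boldsymbol{a}\\\boldsymbol{b}\end{smallmatrix}\right|=\sum_{a,b}\varepsilon_{ab}a_a b_b$, and that a rank-one update of $\boldsymbol{\nu}(y)$ by $\boldsymbol{\nu}(y)P$ gives $\left|\begin{smallmatrix}\boldsymbol{\nu}(y)\\\boldsymbol{\nu}(y)P\end{smallmatrix}\right|=(\operatorname{tr}P)\,?$ — actually for $2\times2$ it equals $\det N\cdot$ (appropriate trace-like contraction); I would carry out this $\varepsilon$-contraction computation to reduce both sides to the same scalar expression in the entries of $C$, $\Omega$, and $\boldsymbol{\nu}(x),\boldsymbol{\nu}(y)$, then conclude equality. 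Once the key identity is established, $\Psi^\gamma(x,y)=\Psi(x,y)$ follows immediately since the overall $\det N$ factors in numerator and denominator cancel.
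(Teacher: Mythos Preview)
Your approach is correct and essentially the same as the paper's: both reduce to showing that the numerator of \eqref{eq:P21form} transforms by a single factor $\det N$, compute $\nabla_x N=-NC\,\boldsymbol{\nu}(x)^T\boldsymbol{\nu}(x)\,N$, and then verify that the correction from $\omega^\gamma$ cancels the extra determinant term coming from $\nabla_x$ hitting $N$. The paper's execution (Lemmas~\ref{lem:omsp} and \ref{lem:detsp}) is cleaner than your proposed $\varepsilon$-contraction: it rewrites the $\omega^\gamma$ correction as the scalar $\boldsymbol{\nu}(x)NC\boldsymbol{\nu}^T(y)$ using $(NC)^T=NC$ from \eqref{eq:sp4rels}, and since $\boldsymbol{\nu}(y)\,\nabla_x N=-\bigl(\boldsymbol{\nu}(y)NC\boldsymbol{\nu}(x)^T\bigr)\,\boldsymbol{\nu}(x)N$ is that same scalar times the row $\boldsymbol{\nu}(x)N$, the leftover determinant factors immediately as the scalar times $\det N\left|\begin{smallmatrix}\boldsymbol{\nu}(x)\\\boldsymbol{\nu}(y)\end{smallmatrix}\right|$ and the cancellation is transparent --- note also that in your displayed determinants for the $\gamma$-transformed second summand the top row should be $\boldsymbol{\nu}(y)N$, not $\boldsymbol{\nu}(y)$.
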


In order to prove Theorem~\ref{th:2P1mod} we need two lemmas. The first lemma concerns the second term on the right hand side of \eqref{eq:ommod}:
\begin{lemma}
\label{lem:omsp}
\begin{align}
\frac{1}{2}\sum_{1\le a\le b\le 2}\left(\nu_a(x)\nu_b(y)+\nu_b(x)\nu_a(y)\right)
\frac{\partial}{\partial \Omega_{ab}}\log\det M
=\boldsymbol{\nu}(x)NC\boldsymbol{\nu}^T(y).
\end{align}
\end{lemma}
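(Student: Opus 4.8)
The plan is to compute $\frac{\partial}{\partial \Omega_{ab}}\log\det M$ directly and then repackage the resulting sum as the quadratic form $\boldsymbol{\nu}(x)NC\boldsymbol{\nu}^T(y)$. First I would use $\frac{\partial}{\partial \Omega_{ab}}\log\det M = \Tr\!\left(M^{-1}\frac{\partial M}{\partial \Omega_{ab}}\right) = \Tr\!\left(N\,C\,\frac{\partial \Omega}{\partial \Omega_{ab}}\right)$, since $M=C\Omega+D$ and only $\Omega$ depends on $\Omega_{ab}$. The one subtlety here is bookkeeping for the symmetric entries: because $\Omega=\Omega^T$, the matrix $\frac{\partial\Omega}{\partial\Omega_{ab}}$ equals $E_{ab}+E_{ba}$ when $a\neq b$ and $E_{aa}$ when $a=b$ (with $E_{ab}$ the elementary matrix), so that $\Tr(NC\,\partial_{\Omega_{ab}}\Omega) = (NC)_{ba}+(NC)_{ab}$ for $a\neq b$ and $(NC)_{aa}$ for $a=b$. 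It is cleanest to fold this into the statement by noting that for \emph{any} symmetric matrix $S$, $\sum_{1\le a\le b\le 2} S_{ab}\,\frac{\partial}{\partial\Omega_{ab}} = \sum_{a,b} S_{ab}\frac{\partial}{\partial\Omega_{ab}}$ once one interprets the latter with the symmetric convention; I would state this normalization once at the start of the computation to avoid a factor-of-two error.

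Next, with $S_{ab}:=\tfrac12(\nu_a(x)\nu_b(y)+\nu_b(x)\nu_a(y))$ — which is manifestly symmetric in $a,b$ — the left-hand side becomes $\sum_{a,b} S_{ab}(NC)_{ab}$, using the symmetry of $NC$. Indeed $NC$ is symmetric: $N=(C\Omega+D)^{-1}$ and one checks $(C\Omega+D)^T NC (C\Omega+D) = C(\Omega C^T + D^T)C? $ — more directly, from the symplectic relations \eqref{eq:sp4rels} one has $C^TA=A^TC$ and $CD^T=DC^T$, from which $M N C M^T$-type manipulations give $(NC)^T = NC$; alternatively $NC$ symmetric follows because $C\Omega+D$ times $C^T$ is symmetric (as $C\Omega C^T$ is symmetric and $DC^T$ is symmetric by \eqref{eq:sp4rels}), hence $NC = N(C\Omega C^T + DC^T)(N)^T\cdot$(something)$^{-1}$... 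I would pin this down cleanly: $M C^T = (C\Omega+D)C^T = C\Omega C^T + DC^T$ is symmetric, so $C^T = N(MC^T) = NMC^T$ gives $NC\cdot$ wait — the slick statement is $N C$ is symmetric iff $C^T N^T = NC$ iff $C^T = NC M^T$, and $N C M^T = N\,C(C\Omega+D)^T = N(C\Omega^T C^T + C D^T) = N(C\Omega C^T + CD^T)$, which by \eqref{eq:sp4rels} ($CD^T=DC^T$) equals $N(C\Omega C^T + DC^T) = N M C^T = C^T$. Hence $NC$ is symmetric, and $\sum_{a,b}S_{ab}(NC)_{ab} = \sum_{a,b}\nu_a(x)(NC)_{ab}\nu_b(y) = \boldsymbol{\nu}(x)\,NC\,\boldsymbol{\nu}^T(y)$, as claimed.

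The main obstacle, and really the only one, is the index-convention bookkeeping around the symmetric derivative $\frac{\partial}{\partial\Omega_{ab}}$ and the off-diagonal-versus-diagonal split in $\partial_{\Omega_{ab}}\Omega$; getting the factor $\tfrac12$ and the symmetrization $(\nu_a\nu_b+\nu_b\nu_a)$ to match requires care but no ideas. I would therefore structure the write-up as: (1) fix the symmetric-derivative convention and record $\partial_{\Omega_{ab}}\log\det M = (NC)_{ab}+(NC)_{ba}-\delta_{ab}(NC)_{aa}$ collapsed to $(NC)_{ab}$ under the convention; (2) prove $NC=(NC)^T$ via \eqref{eq:sp4rels}; (3) substitute and contract with $\boldsymbol{\nu}(x),\boldsymbol{\nu}(y)$. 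This keeps the proof to a few lines.
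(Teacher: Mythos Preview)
Your proposal is correct and follows essentially the same approach as the paper: establish $(NC)^T=NC$ from the symplectic relations \eqref{eq:sp4rels}, compute $\partial_{\Omega_{ab}}\log\det M$ to get $(NC)_{11}$, $(NC)_{22}$, $2(NC)_{12}$, and then contract with the $\nu$'s. The only cosmetic differences are that the paper computes the derivatives by explicit $2\times 2$ cofactor expansion rather than via Jacobi's formula $\Tr(N\,\partial M)$, and proves $(NC)^T=NC$ from the identity $N=A^T-C^T\Omega^{\gamma}$ rather than your $NCM^T=C^T$ route; your versions have the mild advantage of not being tied to the $2\times 2$ case.
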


\noindent 
\begin{proof}  Using the $\Sp(4,\Z)$ relations \eqref{eq:sp4rels} we  find $
N=A^T-C^T \Omega^{\gamma}$ so that 
\begin{align}
(NC)^T=NC.
\label{eq:NCsym}
\end{align}
 The result follows by direct calculation using \eqref{eq:NCsym} where we find
\begin{align*}
\partial_{11}\log\det M&=(M_{22}C_{11}-M_{12}C_{21})/\det M=(NC)_{11},\\
\partial_{22}\log\det M &=(NC)_{22},\\
\partial_{12}\log\det M&=2(NC)_{12}.
\end{align*}
\end{proof}

\begin{lemma}
\label{lem:detsp}
For $ \boldsymbol{\nu}^{\gamma}(x)=\boldsymbol{\nu}N$ of \eqref{eq:mod} we have 
\begin{align}
\begin{vmatrix}
\boldsymbol{\nu}^{\gamma} (x)   \\
 \boldsymbol{\nu}^{\gamma}(y)
\end{vmatrix}
&=
\begin{vmatrix}
\boldsymbol{\nu}(x) \\
\boldsymbol{\nu}(y)
\end{vmatrix}
\det N,
\label{eq:det1}
\\
\begin{vmatrix}
\boldsymbol{\nu}^{\gamma}(y)\\
 \partial_{y} \, \boldsymbol{\nu}^{\gamma}(y)
\end{vmatrix}
&=
\begin{vmatrix}
\boldsymbol{\nu}(y)\\
 \partial_{y} \,\boldsymbol{\nu}(y)
\end{vmatrix}
\det N,
\label{eq:det2}
\\
\begin{vmatrix}
\boldsymbol{\nu}^{\gamma}(y)\\
\nabla^{\gamma}_x \,\boldsymbol{\nu}^{\gamma}(y)
\end{vmatrix}
&=
\begin{vmatrix}
\boldsymbol{\nu}(y)\\
{ \nabla}_x  \,\boldsymbol{\nu}(y)
\end{vmatrix} \det N
+\boldsymbol{\nu}(x)NC\boldsymbol{\nu}^T(y)
\begin{vmatrix}
\boldsymbol{\nu}(x) \\
\boldsymbol{\nu}(y)
\end{vmatrix}
 \det N.
\label{eq:det3}
\end{align}
\end{lemma}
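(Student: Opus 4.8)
The plan is to prove the three determinant identities \eqref{eq:det1}--\eqref{eq:det3} by direct substitution of the transformation law $\boldsymbol{\nu}^{\gamma}(x)=\boldsymbol{\nu}(x)N$ from \eqref{eq:mod}, treating $\boldsymbol{\nu}(x)$ as a $1\times 2$ row vector and $N=(C\Omega+D)^{-1}$ as a $2\times 2$ matrix. The first two identities are the easy ones: stacking two row vectors each multiplied on the right by the same matrix $N$ gives, by multilinearity of the determinant, a factor of $\det N$, so $\left|\begin{smallmatrix}\boldsymbol{\nu}(x)N\\ \boldsymbol{\nu}(y)N\end{smallmatrix}\right| = \left|\begin{smallmatrix}\boldsymbol{\nu}(x)\\ \boldsymbol{\nu}(y)\end{smallmatrix}\right|\det N$, which is \eqref{eq:det1}. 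For \eqref{eq:det2} the only subtlety is that $N$ itself depends on $\Omega$ and hence potentially on $y$; but $\Omega$ is a constant on the surface (it depends only on the moduli, not on points $x,y$), so $\partial_y N=0$ and $\partial_y\boldsymbol{\nu}^{\gamma}(y)=(\partial_y\boldsymbol{\nu}(y))N$, and the same multilinearity argument applies.

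The third identity \eqref{eq:det3} is where the real work lies, and I expect it to be the main obstacle. Here I must compute $\nabla_x^{\gamma}\boldsymbol{\nu}^{\gamma}(y)$. Using $\nabla_x^{\gamma}=\nabla_x$ from \eqref{eq:mod} together with the Leibniz rule, $\nabla_x(\boldsymbol{\nu}(y)N) = (\nabla_x\boldsymbol{\nu}(y))N + \boldsymbol{\nu}(y)(\nabla_x N)$, so the extra term is governed by $\nabla_x N$. The key computation is therefore $\nabla_x N = \sum_{1\le a\le b\le 2}\nu_a(x)\nu_b(x)\,\partial_{\Omega_{ab}} (C\Omega+D)^{-1}$. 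Using $\partial_{\Omega_{ab}}(C\Omega+D)^{-1} = -N C (\partial_{\Omega_{ab}}\Omega) N$ and carefully accounting for the symmetric-matrix convention (so that $\partial_{\Omega_{ab}}\Omega$ is $E_{ab}+E_{ba}$ for $a\neq b$ and $E_{aa}$ for $a=b$, matching the $1\le a\le b\le 2$ summation in \eqref{eq:nabla}), one finds $\nabla_x N = -N C\,\boldsymbol{\nu}^T(x)\boldsymbol{\nu}(x)\,N$. Substituting back, $\nabla_x\boldsymbol{\nu}^{\gamma}(y) = (\nabla_x\boldsymbol{\nu}(y))N - \boldsymbol{\nu}(y)NC\,\boldsymbol{\nu}^T(x)\,\boldsymbol{\nu}(x)N$. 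Now I form the determinant $\left|\begin{smallmatrix}\boldsymbol{\nu}(y)N\\ \nabla_x\boldsymbol{\nu}^{\gamma}(y)\end{smallmatrix}\right|$; the first summand in the second row contributes $\left|\begin{smallmatrix}\boldsymbol{\nu}(y)\\ \nabla_x\boldsymbol{\nu}(y)\end{smallmatrix}\right|\det N$ by multilinearity, and the second summand contributes a determinant whose second row is the scalar $-\boldsymbol{\nu}(y)NC\boldsymbol{\nu}^T(x)$ times $\boldsymbol{\nu}(x)N$. Pulling out that scalar and the $\det N$, this is $-(\boldsymbol{\nu}(y)NC\boldsymbol{\nu}^T(x))\left|\begin{smallmatrix}\boldsymbol{\nu}(y)\\ \boldsymbol{\nu}(x)\end{smallmatrix}\right|\det N = (\boldsymbol{\nu}(y)NC\boldsymbol{\nu}^T(x))\left|\begin{smallmatrix}\boldsymbol{\nu}(x)\\ \boldsymbol{\nu}(y)\end{smallmatrix}\right|\det N$, where the sign flip comes from swapping rows.

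The last point to settle is that $\boldsymbol{\nu}(y)NC\boldsymbol{\nu}^T(x) = \boldsymbol{\nu}(x)NC\boldsymbol{\nu}^T(y)$, so that the scalar factor matches the form stated in \eqref{eq:det3}. This is exactly the symmetry $(NC)^T=NC$ established in the proof of Lemma~\ref{lem:omsp} via the $\Sp(4,\Z)$ relation $N=A^T-C^T\Omega^{\gamma}$ (equation \eqref{eq:NCsym}); since $\boldsymbol{\nu}(y)NC\boldsymbol{\nu}^T(x)$ is a scalar it equals its own transpose $\boldsymbol{\nu}(x)(NC)^T\boldsymbol{\nu}^T(y) = \boldsymbol{\nu}(x)NC\boldsymbol{\nu}^T(y)$. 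Assembling the two contributions gives precisely \eqref{eq:det3}, completing the proof. The main risk of error throughout is bookkeeping with the symmetric-derivative convention $\partial/\partial\Omega_{ab}$ and the associated factors of $2$ on the off-diagonal; I would double-check this against the explicit derivatives $\partial_{11},\partial_{22},\partial_{12}\log\det M$ already computed in Lemma~\ref{lem:omsp}, which fix the convention unambiguously.
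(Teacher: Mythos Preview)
Your proof is correct and follows essentially the same route as the paper's own argument: \eqref{eq:det1} and \eqref{eq:det2} by factoring out $N$ via multiplicativity of the determinant, and \eqref{eq:det3} by computing $\nabla_x N=-NC\,\boldsymbol{\nu}^T(x)\boldsymbol{\nu}(x)\,N$, splitting the resulting determinant by linearity in the second row, swapping rows, and invoking the symmetry $(NC)^T=NC$ of \eqref{eq:NCsym} to rewrite the scalar factor. The only cosmetic difference is that the paper obtains $\nabla_x N$ by first noting $\nabla_x M=C\boldsymbol{\nu}(x)^T\boldsymbol{\nu}(x)$ and then using $(\nabla_x N)M=-N\,\nabla_x M$, whereas you differentiate the inverse directly; the two computations are equivalent.
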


\noindent 
\begin{proof}  
$
\begin{vmatrix}
\boldsymbol{\nu}^{\gamma}(x)   \\
\boldsymbol{\nu}^{\gamma}(y)
\end{vmatrix} 
= 
\begin{vmatrix}
\boldsymbol{\nu}(x)N  \\
\boldsymbol{\nu}(y)N 
\end{vmatrix}
=
\begin{vmatrix}
\boldsymbol{\nu}(x)  \\
\boldsymbol{\nu}(y) 
\end{vmatrix} \det N$ and similarly for \eqref{eq:det2}.
To prove \eqref{eq:det3}, we first note that 
\begin{align*}
\nabla_x M=C\boldsymbol{\nu}(x)^T \boldsymbol{\nu}(x). 
\end{align*}
 Furthermore, 
$(\nabla_x 
N)M 
=-N \nabla_x M$ 
so that 
\begin{align}
\nabla_x N&=-N  (\nabla_x M)N =-NC\boldsymbol{\nu}(x)^T \boldsymbol{\nu}(x) N.
\label{eq:nablaN}
\end{align}
Hence, using \eqref{eq:mod}  and \eqref{eq:NCsym},  we find that 
\begin{align*}
\begin{vmatrix}
\boldsymbol{\nu}^{\gamma}(y)\\
 \nabla^{\gamma}_x \, \boldsymbol{\nu}^{\gamma}(y)
\end{vmatrix} 
=& 
\begin{vmatrix}
\boldsymbol{\nu}(y)N \\
\nabla_x  \left(\boldsymbol{\nu}(y)\right)N +\boldsymbol{\nu}(y)\nabla_x  N
\end{vmatrix}
\\
=& 
\begin{vmatrix}
\boldsymbol{\nu}(y)\\
{ \nabla}_x \, \boldsymbol{\nu}(y)
\end{vmatrix} \det N
+\begin{vmatrix}
\boldsymbol{\nu}(y)N \\
-\boldsymbol{\nu}(y)NC\boldsymbol{\nu}(x)^T  \boldsymbol{\nu}(x)N 
\end{vmatrix}
\\
=&\begin{vmatrix}
\boldsymbol{\nu}(y)\\
{ \nabla}_x  \,\boldsymbol{\nu}(y)
\end{vmatrix} \det N
+\boldsymbol{\nu}(x)NC\boldsymbol{\nu}^T(y) 
\begin{vmatrix}
\boldsymbol{\nu}(x) \\
\boldsymbol{\nu}(y)
\end{vmatrix}
\det N.
\end{align*}  
\end{proof} 
\medskip
\begin{proof}[Proof of Theorem~\ref{th:2P1mod}]
Combining Lemmas~\ref{lem:omsp} and \ref{lem:detsp} we immediately obtain 
\begin{align*}
\omega^\gamma(x,y)\begin{vmatrix}
\boldsymbol{\nu}^\gamma(x) \\
\boldsymbol{\nu}^\gamma(y)
\end{vmatrix}
+\begin{vmatrix}
\boldsymbol{\nu}^\gamma(y)\\
\nabla^\gamma_x \,\boldsymbol{\nu}^\gamma(y)
\end{vmatrix}=\left[
\omega(x,y)\begin{vmatrix}
\boldsymbol{\nu}(x) \\
\boldsymbol{\nu}(y)
\end{vmatrix}
+\begin{vmatrix}
\boldsymbol{\nu}(y)\\
\nabla_x \,\boldsymbol{\nu}(y)
\end{vmatrix}
\right]\det N.
\end{align*}
Thus using \eqref{eq:det2} we find $\Psi(x,y) $ is $\Sp(4,\Z)$ modular  invariant.   
\end{proof}

\begin{corollary}
\label{cor:ODEmod}
The differential equations \eqref{eq:omDE_indep}-\eqref{eq:SDE_indep} are $\Sp(4,\Z)$ invariant.
\end{corollary}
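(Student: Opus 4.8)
The plan is to show that each of the differential equations \eqref{eq:omDE_indep}, \eqref{eq:nuDE_indep} and \eqref{eq:SDE_indep} is preserved term-by-term under a transformation $\gamma\in\Sp(4,\Z)$, using the known transformation laws \eqref{eq:mod}--\eqref{eq:Smod} together with Theorem~\ref{th:2P1mod} which tells us that $\Psi(x,y)$ is $\Sp(4,\Z)$ invariant. Since $\Psi(x,y)$ is invariant, every term in these equations built \emph{only} from $\Psi$, $\partial_y\Psi$ and ordinary $y$-derivatives is automatically invariant; the only work is to track how $\nabla_x$, $\omega$, $\nu_a$ and $s$ transform and to check that the anomalous pieces cancel.

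For \eqref{eq:nuDE_indep}, I would apply $\gamma$ to both sides. The right-hand side $\omega(x,y)\nu_a(x)$ acquires, via \eqref{eq:ommod}, an anomalous term $-\tfrac12\sum_{a\le b}(\nu_a(x)\nu_b(y)+\nu_b(x)\nu_a(y))\,\partial_{\Omega_{ab}}(\log\det M)\cdot\nu_a(x)$; by Lemma~\ref{lem:omsp} this is exactly $-\boldsymbol\nu(x)NC\boldsymbol\nu^T(y)\,\nu_a(x)$ (up to the contraction of the $\nu_a$ index against $N$). On the left-hand side, $\nabla_x$ itself is invariant by \eqref{eq:mod}, but it now acts on $\nu_a^\gamma(y)=(\boldsymbol\nu(y)N)_a$, and using $\nabla_x N=-NC\boldsymbol\nu(x)^T\boldsymbol\nu(x)N$ from \eqref{eq:nablaN} produces precisely the matching anomaly. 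Since $\Psi(x,y)$ is invariant, the middle term $\partial_y(\Psi(x,y)\nu_a(y))\,dy$ transforms just by $\nu_a(y)\mapsto(\boldsymbol\nu(y)N)_a$, which is linear and hence consistent. Collecting the three contributions, the overall equation transforms by right-multiplication by $N$ on the free index $a$, so it holds in the $\gamma$-frame iff it holds in the original frame. The argument for \eqref{eq:omDE_indep} is the same but with two free differentials $\nu(y_1),\nu(y_2)$: the right-hand side $\omega(x,y_1)\omega(x,y_2)$ now picks up two anomalous terms from \eqref{eq:ommod}, while $\nabla_x$ acting on $\omega(y_1,y_2)$ (which itself is \emph{not} invariant) generates compensating terms; one checks that the bilinear-in-$\nu(x)$ anomalies cancel using \eqref{eq:NCsym} and Lemma~\ref{lem:omsp}. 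Alternatively, \eqref{eq:nuDE_indep} follows from \eqref{eq:omDE_indep} by $\beta^a$-integration exactly as in the proof of Corollary~\ref{diffeqn_indep}, so it suffices to verify \eqref{eq:omDE_indep}.

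For \eqref{eq:SDE_indep}, the new ingredient is the transformation of the projective connection, $s^\gamma(y)=s(y)-6\nabla_y\log\det M$ from \eqref{eq:Smod}. The left-hand operator $\nabla_x+dy(\Psi\partial_y+2\partial_y\Psi)$ applied to $s^\gamma(y)$ splits into the invariant-operator-applied-to-$s(y)$ piece plus $-6$ times the same operator applied to $\nabla_y\log\det M$; the term $dy^3\partial_y^3\Psi$ is invariant by Theorem~\ref{th:2P1mod}. On the right, $6\omega^\gamma(x,y)^2$ expands via \eqref{eq:ommod} into $6\omega(x,y)^2$ minus cross terms and plus a square of the anomaly. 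The claim is that all $\log\det M$-dependent pieces on the two sides agree; this is a consistency check that can be reduced, via Corollary~\ref{Seqn_indep} (which derives \eqref{eq:SDE_indep} from \eqref{eq:omDE_indep} by the $y_1=y+\varepsilon$, $y_2=y$ degeneration), to the already-established invariance of \eqref{eq:omDE_indep}. Indeed, since \eqref{eq:SDE_indep} is an \emph{algebraic consequence} of \eqref{eq:omDE_indep} valid for all $\Omega\in\HH_2$, and the derivation used only the coordinate expansion \eqref{omega} of $\omega$ (which is compatible with taking limits in any fixed frame), the modular invariance is inherited automatically.

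I expect the main obstacle to be the bookkeeping in \eqref{eq:omDE_indep}: one must carefully match the anomalous term $-\boldsymbol\nu(x)NC\boldsymbol\nu^T(y_1)\,\omega(x,y_2)$ (and its $y_1\leftrightarrow y_2$ partner) coming from \eqref{eq:ommod} against $\nabla_x$ acting through $\omega(y_1,y_2)$, keeping track of which arguments the determinant-of-$M$ derivatives hit. The cleanest route avoids this entirely: prove modular invariance of \eqref{eq:omDE_indep} directly (a few lines using Lemmas~\ref{lem:omsp}, \ref{lem:detsp}, \eqref{eq:NCsym}, \eqref{eq:nablaN}), then invoke Corollaries~\ref{diffeqn_indep} and \ref{Seqn_indep} to propagate invariance to \eqref{eq:nuDE_indep} and \eqref{eq:SDE_indep} for free, since those corollaries exhibit the latter two as frame-independent consequences of the former. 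This reduces the entire corollary to a single bilinear cancellation, which is essentially the computation already performed in the proof of Theorem~\ref{th:2P1mod}.
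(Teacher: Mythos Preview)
Your proposal is correct and follows essentially the same route as the paper: verify the modular invariance of \eqref{eq:omDE_indep} by tracking the anomalous pieces via Lemma~\ref{lem:omsp}, \eqref{eq:NCsym} and \eqref{eq:nablaN}, then obtain \eqref{eq:nuDE_indep} and \eqref{eq:SDE_indep} as consequences (the paper handles \eqref{eq:nuDE_indep} by ``a similar method'' rather than $\beta$-integration, and \eqref{eq:SDE_indep} by exactly the $y_1\to y_2$ degeneration you suggest). The one tactical shortcut in the paper worth noting is that, when computing the change in the left-hand side of \eqref{eq:omDE_indep}, it applies the already-established identity \eqref{eq:nuDE_indep} to the combination $\nabla_x\nu_a(y_r)+\partial_{y_r}(\Psi(x,y_r)\nu_a(y_r))dy_r$ appearing inside, which collapses the bookkeeping you anticipate into a single line.
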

\begin{proof}
Under the action of $\gamma\in \Sp(4,\Z)$,  the change in the left hand side of  \eqref{eq:omDE_indep} is
\begin{align*}
&-\nabla_x \left(\boldsymbol{\nu}(y_1)NC\boldsymbol{\nu}^T(y_2)\right)
-  \sum_{r=1}^2 \partial_{y_r} \left( \Psi(x,y_r) \boldsymbol{\nu}(y_1)NC\boldsymbol{\nu}^T(y_2) \right)dy_r
\\
&=
-\sum_{r=1}^2\omega(x,y_r)\boldsymbol{\nu}(x)NC\boldsymbol{\nu}^T(y_r) 
-\boldsymbol{\nu}(y_1)(\nabla_x  N)C\boldsymbol{\nu}^T(y_2),
\end{align*}
using \eqref{eq:nuDE_indep}.
But  \eqref{eq:NCsym} and \eqref{eq:nablaN}  imply 
\[
-\boldsymbol{\nu}(y_1)(\nabla_x  N)C\boldsymbol{\nu}^T(y_2)
=\left(\boldsymbol{\nu}(x)NC\boldsymbol{\nu}^T(y_1)\right)\left(\boldsymbol{\nu}(x)NC\boldsymbol{\nu}^T(y_2)\right).
\]
Thus the total change  in the left hand side of \eqref{eq:omDE_indep} is 
\[
\omega^\gamma(x,y_1)\omega^\gamma(x,y_2)-\omega(x,y_1)\omega(x,y_2),
\]
as required. A similar method shows that \eqref{eq:nuDE_indep} is modular invariant. Lastly, $\partial_y^k \Psi(x,y)$ is modular invariant so that modular invariance of \eqref{eq:SDE_indep} follows by considering the $y_1\rightarrow y_2$ limit in the above analysis as described in the proof of Corollary~\ref{Seqn_indep}. 
\end{proof} 
\medskip

 Let us now consider the modular properties of the operator $\cO_n(\mathbf{z})$. Following Remark~\ref{rem:Onrem} we know that $\cO_n(\mathbf{z})$ depends on $\omega(x,y)$, $\nu_1(x)$, $\nu_2(x)$, $s(x)$ and $\frac{\partial}{\partial \Omega_{ab}}$  for all $\Omega\in\HH_2$. These terms transform under  $\gamma\in\Sp(4,\Z)$  as in \eqref{eq:mod}-\eqref{eq:Smod} so that
\[
\cO_n(\mathbf{z})\rightarrow
\cO_n^\gamma(\mathbf{z}).
\]
We then find
\begin{theorem}
\label{th:OnMod}
For differentiable $F=F(\Omega)$  we have for  all $\gamma\in \Sp(4,\Z)$ that
\begin{align}
\cO_n^\gamma(\mathbf{z}) \left( \det (M)^{c/2}F\right)=
\det(M)^{c/2}\cO_n(\mathbf{z})  F.
\label{eq:OnMod}
\end{align}
\end{theorem}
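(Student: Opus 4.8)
The plan is to prove \eqref{eq:OnMod} by induction on $n$ using the recurrence relation \eqref{eq:Ongen} from Remark~\ref{rem:Onrem}, which expresses $\cO_n$ entirely in terms of $\nabla_{z_1}$, $s(z_1)$, $\Psi(z_1,z_k)$ and $\cO_{n-1},\cO_{n-2}$ — precisely the ingredients whose modular behaviour we already control. The base cases $n=1$ and $n=2$ are checked directly: for $n=1$, $\cO_1(z_1)=\nabla_{z_1}+\tfrac{c}{12}s(z_1)$ by \eqref{eq:On1}, and applying \eqref{eq:mod} and \eqref{eq:Smod} gives $\cO_1^\gamma(z_1)=\nabla_{z_1}+\tfrac{c}{12}s(z_1)-\tfrac{c}{2}\nabla_{z_1}\log\det M$; the extra term combines with the Leibniz action of $\nabla_{z_1}$ on $\det(M)^{c/2}F$ to produce exactly $\det(M)^{c/2}\cO_1(z_1)F$, since $\nabla_{z_1}\det(M)^{c/2}=\tfrac{c}{2}\det(M)^{c/2}\nabla_{z_1}\log\det M$.

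\textbf{Inductive step.} Assume \eqref{eq:OnMod} holds for $n-1$ and $n-2$. By Theorem~\ref{th:2P1mod}, $\Psi(z_1,z_k)$ and all its $y$-derivatives are $\Sp(4,\Z)$ invariant, so the $\Psi$-dependent pieces and the $\cO_{n-2}$ term of \eqref{eq:Ongen} transform by simply replacing $\cO_{n-1}$, $\cO_{n-2}$ with $\cO_{n-1}^\gamma$, $\cO_{n-2}^\gamma$. The only genuinely non-invariant ingredients are $\nabla_{z_1}$ (invariant as an operator by \eqref{eq:mod}, but which fails to commute with multiplication by $\det(M)^{c/2}$) and $s(z_1)$ (shifted by $-6\nabla_{z_1}\log\det M$ via \eqref{eq:Smod}). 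Applying $\cO_n^\gamma(\mathbf{z})$ to $\det(M)^{c/2}F$ and using the Leibniz rule for $\nabla_{z_1}$, I would collect the terms: $\nabla_{z_1}(\det(M)^{c/2}\,\cO_{n-1}^\gamma F)=\det(M)^{c/2}(\nabla_{z_1}\cO_{n-1}^\gamma F)+\tfrac{c}{2}(\nabla_{z_1}\log\det M)\det(M)^{c/2}\cO_{n-1}^\gamma F$, while the $\tfrac{c}{12}s^\gamma(z_1)$ term contributes $\tfrac{c}{12}s(z_1)-\tfrac{c}{2}\nabla_{z_1}\log\det M$ acting on $\det(M)^{c/2}\cO_{n-1}^\gamma F$. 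The $-\tfrac{c}{2}\nabla_{z_1}\log\det M$ from the Schwarzian-type shift cancels the $+\tfrac{c}{2}\nabla_{z_1}\log\det M$ from the Leibniz rule, leaving $\det(M)^{c/2}\big(\nabla_{z_1}+\tfrac{c}{12}s(z_1)+\text{(}\Psi\text{ terms)}\big)\cO_{n-1}^\gamma F$ plus the $\cO_{n-2}$ term. Finally, substituting $\cO_{n-1}^\gamma F = \det(M)^{-c/2}\cO_{n-1}(\det(M)^{c/2}F)$ from the induction hypothesis — and similarly for $\cO_{n-2}$ — and reassembling via \eqref{eq:Ongen} yields $\det(M)^{c/2}\cO_n(\mathbf{z})F$.

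\textbf{Main obstacle.} The bookkeeping subtlety is that \eqref{eq:Ongen} applies $\nabla_{z_1}$ not directly to $F$ but to $\cO_{n-1}(z_2,\ldots,z_n)F$, so one must track how $\det(M)^{c/2}$ (which depends on $\Omega$, hence is differentiated by $\nabla_{z_1}$ and by the $\partial/\partial\Omega_{ab}$ buried inside $\cO_{n-1}$) interacts with the full operator $\cO_{n-1}^\gamma$. The cleanest way to handle this is to observe that the induction hypothesis, rewritten as the operator identity $\cO_{n-1}^\gamma = \det(M)^{-c/2}\circ\cO_{n-1}\circ\det(M)^{c/2}$ (conjugation of operators), lets one replace each $\cO_{\bullet}^\gamma$ in the transformed \eqref{eq:Ongen} by its conjugate; then the whole right-hand side of the transformed recurrence becomes $\det(M)^{-c/2}$ times the untransformed recurrence applied after $\det(M)^{c/2}$, provided the "extra" first-order operator $\nabla_{z_1}+\tfrac{c}{12}s(z_1)$ also conjugates correctly — which is exactly the $n=1$ computation. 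So the hard part is purely organizational: setting up the conjugation-of-operators formalism so that the single Schwarzian-versus-Leibniz cancellation, established once at $n=1$, propagates through \eqref{eq:Ongen} without re-deriving it at each level. I expect no new analytic input beyond Theorem~\ref{th:2P1mod} and the transformation laws \eqref{eq:mod}–\eqref{eq:Smod}.
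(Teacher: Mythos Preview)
Your proposal is correct and follows essentially the same approach as the paper: induction on $n$ via the recurrence \eqref{eq:Ongen}, using the $\Sp(4,\Z)$ invariance of $\Psi$ from Theorem~\ref{th:2P1mod}, with the Leibniz contribution of $\nabla_{z_1}$ on $\det(M)^{c/2}$ cancelling the shift \eqref{eq:Smod} in $s(z_1)$. The paper's version is marginally more streamlined in that it takes $n=0$ (trivially) as the base case rather than $n=2$, and in the inductive step applies the hypothesis $\cO_{n-1}^\gamma(\det(M)^{c/2}F)=\det(M)^{c/2}\cO_{n-1}F$ \emph{first} and only then invokes the $n=1$ cancellation with $\cO_{n-1}F$ in place of $F$, which sidesteps the bookkeeping issue you flag in your ``Main obstacle'' paragraph.
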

\begin{proof}
We prove the result by induction in $n$. The result is trivially true for $n=0$. For $n=1$  we use \eqref{eq:On1}  to find
\begin{align*}
\cO_1^\gamma(z_1) \left( \det (M)^{c/2}F\right)&=
\left(\nabla^\gamma_{z_1}+ \frac{c}{12}s^\gamma(z_{1})\right)\left( \det (M)^{c/2}F\right)\\
&=\det (M)^{c/2}\cO_1(z_1)F,
\end{align*}
using  \eqref{eq:mod} and \eqref{eq:Smod}.
\eqref{eq:Ongen} implies by induction that for $n\ge 2$  
\begin{align*}
& \cO_n^\gamma(\mathbf{z})\left( \det (M)^{c/2}F\right)
\\
 &=
\Bigg( \nabla^\gamma_{z_1} +\frac{c}{12} s^\gamma(z_1)
+ \sum_{k=2}^n dz_k \left( \Psi(z_1,z_k) \partial_{z_k} + 2\partial_{z_k}\Psi(z_1,z_k) \right)\Bigg)\left( \det (M)^{c/2}{\cO}_{n-1}F\right)
\notag
\\
&\quad 
+3c\det (M)^{c/2} \sum_{k=2}^n dz_k^3\left(\partial_{z_k}^3\Psi(z_1,z_k)\right) {\cO}_{n-2}F
\\ &=  \det (M)^{c/2}\cO_n(\mathbf{z})  F,
\end{align*}
using \eqref{eq:mod} and \eqref{eq:Smod} again. Thus the result follows.
\end{proof}

\end{document}